\newtheorem{theorem}{Theorem}[section]
\newtheorem{lemma}[theorem]{Lemma}
\newtheorem{rema}[theorem]{Remark}
\numberwithin{equation}{section}
\begin{document}

\title[Computation of Elasticity Transmission Eigenvalues]{Computation of Transmission Eigenvalues for Elastic Waves}

\author{Xia Ji}
\address{LSEC, Academy of Mathematics and System Sciences,
Chinese Academy of Sciences, Beijing, 100190, China.}
\email{jixia@lsec.cc.ac.cn}
\author{Peijun Li}
\address{Department of Mathematics, Purdue University, West Lafayette, IN
47907, USA.}
\email{lipeijun@math.purdue.edu}
\author{Jiguang Sun}
\address{Department of Mathematical Sciences, Michigan Technological University, Houghton, MI 49931, USA.}
\email{jiguangs@mtu.edu}

\thanks{The research of X. Ji is partially supported by the National Natural
Science Foundation of China with Grant Nos. 11271018 and 91630313, and National
Center for Mathematics and Interdisciplinary Sciences, Chinese Academy of
Sciences. The research of P. Li was supported in part by the NSF grant
DMS-1151308. The research of J. Sun was supported in part by the NSF grant DMS-1521555.}

\subjclass[2000]{65N25, 65N30, 47B07}

\keywords{Transmission eigenvalue problem, elastic wave equation, finite element
method}

\begin{abstract}
The goal of this paper is to develop numerical methods computing a few smallest elasticity transmission
eigenvalues, which are of practical importance in inverse scattering theory. The
problem is challenging since it is nonlinear, non-self-adjoint, and of fourth
order. We construct a nonlinear function whose values are generalized
eigenvalues of a series of self-adjoint fourth order problems. The roots of the
function are the transmission eigenvalues. Using an $H^2$-conforming finite
element for the self-adjoint fourth order eigenvalue problems,
we employ a secant method to compute the roots of the nonlinear function. The
convergence of the proposed method is proved. In addition, a mixed finite
element method is developed for the purpose of verification. Numerical examples
are presented to verify the theory and demonstrate the effectiveness of the two methods.
\end{abstract}

\maketitle

\section{Introduction}

Transmission eigenvalues have important applications in inverse scattering theory. For example,
they can be used to obtain useful information on the physical properties of the
scattering targets \cite{CakoniEtal2010IP,Sun2011IP}. In this paper, we consider the interior transmission
eigenvalue problem for elastic waves. Similar to the cases of acoustic
and electromagnetic waves, the elasticity transmission eigenvalue (ETE) problem
plays a critical role in the qualitative reconstruction methods for
inhomogeneous media. There are only a few theoretical studies on the ETE
problem \cite{Charalambopoulos2002JE, CharalambopoulosAnagnostopoulos2002JE,
BellisGuzina2010JE, BellisCakoniGuzina2013IMAAM}. It is shown in
\cite{BellisCakoniGuzina2013IMAAM} that there exists a countable set of
elasticity transmission eigenvalues under suitable conditions on elastic tensors
and mass densities. 

Numerical methods for the acoustic transmission eigenvalues have been developed
by many researchers recently \cite{ColtonMonkSun2010, Sun2011SIAMNA,
JiSunTurner2012ACMTOM, AnShen2013JSC, Kleefeld2013IP, JiSunXie2014JSC,
CakoniMonkSun2014CMAM, LiEtal2014JSC, YangBiLiHan2016}.
However, there exist much fewer papers \cite{MonkSun2012SIAMSC, SunXu2013IP,
HuangHuangLin2015SIAMSC} for the
electromagnetic transmission eigenvalue problems. It is highly non-trivial to
develop finite element methods for the transmission eigenvalue problems in
general since the problem is nonlinear and nonself-adjoint \cite{SunZhou2016}.
Although out of the scope of the current paper, it
is useful to point out that the finite element discretization usually leads to
non-Hermitian matrix eigenvalue problems. It is challenging to compute
(interior) generalized eigenvalues for non-Hermitian matrices. In particular,
when the size of matrices is large and there is no spectrum information,
classical methods in numerical linear algebra would fail. New methods have
emerged to treat such difficult problems \cite{HuangEtal2016JCP,
HuangEtal2017arXiv}.

The goal of this paper is to develop effective numerical methods to compute
a few smallest real transmission eigenvalues, which can be used to estimate
material property of the elastic body (see,
e.g., \cite{Sun2011IP}). Unlike the classical Laplacian eigenvalue problem or
the biharmonic eigenvalue problem, the transmission eigenvalue problem is
nonlinear and nonself-adjoint. To overcome this issue, we reformulate the
problem as a combination of a nonlinear function and a series of fourth order
self-adjoint eigenvalue problems. Specifically, the ETE is first written
as a nonlinear fourth order problem, which turns out to be a quadratic
eigenvalue problem. To avoid dealing with the nonself-adjointness directly, we construct a nonlinear function whose roots are the
elasticity transmission eigenvalues. The values of the nonlinear function are
generalized eigenvalues of self-adjoint coercive fourth order
problems, which can be treated using classical $H^2$-conforming finite
elements. A secant based iterative method is adopted to compute the roots of the
nonlinear function. In addition, we give a mixed method using the
Lagrange elements for the purpose of verification. 

The current paper, to the
authors' knowledge, is the first numerical study on the ETE.
We hope that it can attract more numerical analysts to this interesting and
challenging topic.
The rest of the paper is organized as follows. In Section 2, we introduce
the elasticity transmission eigenvalue problem and derive a quadratic eigenvalue
problem based on a fourth order partial differential equation. 
To avoid direct treatment of the nonlinearity and nonself-adjointness, the
problem is decomposed into a nonlinear function and a series of related linear
self-adjoint fourth order eigenvalue problems. The values of the nonlinear
function are generalized eigenvalues of the fourth order problems.
The roots of the nonlinear function are transmission eigenvalues.
$H^2$-conforming Argyris element for the fourth order problems is presented in Section 3. A secant based iterative method is used in
Section 4 to compute roots of the nonlinear function. Section 5 introduces a mixed finite element method for verification. 
Numerical experiments are presented in Section 6. The paper is concluded with some discussion and future works in Section 7.

\section{The elasticity transmission eigenvalue problem}

Let $\boldsymbol x=(x, y)^\top\in\mathbb R^2$ and $D\subset\mathbb{R}^2$ be a bounded Lipschitz domain. 
Consider the two-dimensional elastic wave problem of finding
$\boldsymbol{u}$ with zero trace on the boundary of $D$, i.e., $\Gamma$, such that
\begin{equation}\label{ElasticityLHS1}
 \nabla\cdot\sigma(\boldsymbol{u})+\omega^2 \rho \boldsymbol{u}={\boldsymbol 0}
\quad\text{in}~D \subset \mathbb{R}^2,
\end{equation}
where $\boldsymbol{u}(\boldsymbol{x})=(u_1(\boldsymbol{x}),
u_2(\boldsymbol{x}))^\top$ is the displacement vector of the wave field,
$\omega>0$ is the angular frequency, $\rho$ is the mass density, and
$\sigma(\boldsymbol{u})$ is  the stress tensor given by the generalized Hooke law
\[
 \sigma(\boldsymbol{u})=2\mu\varepsilon(\boldsymbol{u})+\lambda {\rm
tr}(\varepsilon(\boldsymbol{u})){\rm I}.
\]
Here the strain tensor $\varepsilon(\boldsymbol{u})$ is given by  
\[
\varepsilon(\boldsymbol{u})=\frac{1}{2}(\nabla\boldsymbol{u}
+(\nabla\boldsymbol{u})^\top),
\]
where the two constants $\mu, \lambda$ are called the Lam\'{e} parameters satisfying
$\mu>0, \lambda+\mu>0$, ${\rm I}\in\mathbb{R}^{2\times 2}$ is the identity
matrix, and $\nabla\boldsymbol{u}$ is the displacement gradient tensor
\[
 \nabla\boldsymbol{u}=\begin{bmatrix}
                       \partial_x u_1 & \partial_y u_1\\
                       \partial_x u_2 & \partial_y u_2
                      \end{bmatrix}.
\]
Explicitly, we have
\begin{equation}\label{DefSigma}
 \sigma(\boldsymbol{u})=\begin{bmatrix}
  (\lambda+2\mu)\partial_x u_1 + \lambda \partial_y u_2 & \mu (\partial_y u_1 +
\partial_x u_2)\\
\mu (\partial_x u_2 + \partial_y u_1) & \lambda\partial_x u_1 +
(\lambda+2\mu)\partial_y u_2
 \end{bmatrix}.
\end{equation}

Given
${\boldsymbol u}, {\boldsymbol v} \in H_0^1(D)^2$, it follows from the
integration by parts that 
\begin{equation}\label{sigmaugv}
 ({\sigma} ({\boldsymbol u}), \nabla {\boldsymbol v})=\int_D
\sigma(\boldsymbol u):\nabla\boldsymbol v \,{\rm d}\boldsymbol x=
\int_D \left( 2 \mu \varepsilon({\boldsymbol u}): 
\varepsilon({\boldsymbol v}) +\lambda (\nabla\cdot{\boldsymbol u})( 
\nabla\cdot{\boldsymbol v})  \right){\rm d}{\boldsymbol x},
\end{equation}
where $A:B={\rm tr}(AB^\top)$ is the Frobenius inner product of square
matrices $A$ and $B$. We recall the first Korn inequality \cite[Corollary
11.2.25]{BrennerScott2002}: there exists a positive constant $C$ such that
\[
\|\varepsilon({\boldsymbol u})\|_{L^2} \ge C \|{\boldsymbol
u}\|_{H^1} \quad \text{for all } {\boldsymbol u} \in H_0^1(D)^2,
\]
which guarantees the well-posedness of \eqref{sigmaugv}.


Let $\mu_0, \lambda_0$ be the Lam\'{e} parameters of the free space.
Assume the domain $D$ is filled with a
homogeneous and isotropic elastic medium with Lam\'{e}
constants $\lambda_1$ and $\mu_1$. The transmission eigenvalue problem for the
elastic waves is to find values of $\omega^2$ such that there exists non-trivial
solutions $\boldsymbol{u}, \boldsymbol{v}$ satisfying
\begin{subequations}\label{tep}
\begin{align}
\nabla\cdot\sigma_0(\boldsymbol{u})+\omega^2 \rho_0 \boldsymbol{u}=0 &\quad\text{in} ~
D,\\
\nabla\cdot\sigma_1(\boldsymbol{v})+\omega^2 \rho_1 \boldsymbol{v}=0 &\quad\text{in} ~
D,\\
\boldsymbol{u}=\boldsymbol{v} &\quad\text{on} ~ \Gamma,\\
\sigma_0(\boldsymbol{u}){\boldsymbol \nu}=\sigma_1(\boldsymbol{v}){\boldsymbol \nu} &\quad\text{on} ~
\Gamma,
\end{align}
\end{subequations}
where
\[
 \sigma_i(\boldsymbol{u})=\begin{bmatrix}
  (\lambda_i+2\mu_i)\partial_x u_1 + \lambda_i \partial_y u_2 & \mu_i
(\partial_y u_1 + \partial_x u_2)\\
\mu_i (\partial_x u_2 + \partial_y u_1) & \lambda_i\partial_x u_1 +
(\lambda_i+2\mu_i)\partial_y u_2
 \end{bmatrix},\quad i=0,1,
\]
and $\sigma {\boldsymbol \nu}$ denotes the matrix multiplication of the
stress tensor $\sigma$ and the normal vector $\boldsymbol \nu$.

In this paper, we consider the case when $\rho_0 \ne \rho_1, \sigma_0 =
\sigma_1=\sigma$, i.e., the case of equal elastic
tensors \cite{BellisCakoniGuzina2013IMAAM}. In addition, we assume that the mass
density distributions satisfy the following conditions
\begin{equation}\label{pP}
p \le \rho_0({\boldsymbol x}) \le P, \quad p_* \le 
\rho_1({\boldsymbol x}) \le P_*, \quad {\boldsymbol x} \in D,
\end{equation}
where $p, p_*$ and $P, P_*$ are positive constants. 

Define the Sobolev space
\begin{equation}\label{spaceV}
V=\{ \boldsymbol{\phi} \in H^2(D)^2: {\boldsymbol \phi} = {\boldsymbol 0} \text{
and } \sigma({\boldsymbol \phi}) {\boldsymbol \nu} = {\bf 0} ~ \text{on}~ \Gamma
\}.
\end{equation}
Let ${\boldsymbol w} = {\boldsymbol  u} - {\boldsymbol  v}$. The transmission
eigenvalue problem can be formulated as follows: Find $\omega^2$ and $ 
{\boldsymbol w} \ne {\bf 0}$ such that
\begin{equation}\label{FourthOrder}
\left(\nabla \cdot {\boldsymbol \sigma} + \omega^2 \rho_1 \right)
(\rho_1-\rho_0)^{-1} \left(\nabla \cdot {\boldsymbol \sigma} + \omega^2 \rho_0
\right) {\boldsymbol w} = {\bf 0}.
\end{equation}
The corresponding weak formulation of \eqref{FourthOrder} is to find $\omega^2
\in \mathbb C$ and ${\bf 0} \ne {\boldsymbol  w} \in V$ such that
\begin{equation}\label{FourthOrderWF}
 \left( (\rho_1-\rho_0)^{-1} \left(\nabla \cdot {\sigma} + \omega^2 \rho_0
\right) {\boldsymbol w},
 \left(\nabla \cdot {\sigma} + \omega^2 \rho_1 \right){\boldsymbol
\varphi}\right)= {\bf 0}\quad \text{for all } {\boldsymbol \varphi} \in V.
\end{equation}
Let $\tau= \omega^2$. We define two sesquilinear forms on $V \times V$
\begin{align*}
\mathcal{A}_\tau({\boldsymbol \phi}, {\boldsymbol \varphi}) &= 
\left( (\rho_1-\rho_0)^{-1} \left(\nabla \cdot {\sigma} + \tau \rho_0 \right)
{\boldsymbol \phi}, \left(\nabla \cdot {\sigma} + \tau \rho_0
\right){\boldsymbol \varphi}\right) + \tau^2 (\rho_0 {\boldsymbol \phi},
{\boldsymbol \varphi}),\\
\mathcal{B}({\boldsymbol \phi}, {\boldsymbol \varphi})  &= ({\sigma}
({\boldsymbol \phi}), \nabla {\boldsymbol \varphi}).
\end{align*}
It is clear that $\mathcal{A}_\tau$ is symmetric. Due to \eqref{sigmaugv},
$\mathcal{B}$ is also symmetric.

The variational problem \eqref{FourthOrderWF} can be written equivalently as
follows: Find $\tau \in \mathbb C$ and ${\bf 0} \ne {\boldsymbol 
w} \in V$ such that
\begin{equation}\label{Nonlinear}
\mathcal{A}_\tau({\boldsymbol w}, {\boldsymbol \varphi}) = \tau
\mathcal{B}({\boldsymbol w}, {\boldsymbol \varphi})  \quad \text{for all }
{\boldsymbol \varphi} \in V.
\end{equation}
This is a nonlinear problem since $\tau$ appears on both sides of the equation.
For a fixed $\tau$, we consider an associated generalized eigenvalue problem
\begin{equation}\label{Linear}
\mathcal{A}_\tau({\boldsymbol w}, {\boldsymbol \varphi}) = \gamma(\tau)
\mathcal{B}({\boldsymbol w}, {\boldsymbol \varphi}) \quad \text{for all }
{\boldsymbol \varphi} \in V.
\end{equation}
Formally, $\tau$ is a transmission eigenvalue if
$\tau$ is a root of the nonlinear function
\begin{equation}\label{functionf}
f(\tau):=\gamma(\tau) - \tau.
\end{equation}

In the rest of this section, we study the generalized eigenvalue problem
\eqref{Linear}. It is shown in \cite{MarsdenHughes1994} that there exists
$\beta>0$ such that
\[
\|\nabla \cdot {\sigma}({\boldsymbol \phi})\|^2 +  \| {\boldsymbol \phi}\|^2 \ge
\beta \|{\boldsymbol \phi}\|^2_{H^2(D)^2} \quad \text{for } {\boldsymbol
\phi} \in V.
\]

The following lemma is useful in the subsequent analysis. The proof can be
found in \cite{BellisCakoniGuzina2013IMAAM}.

\begin{lemma}\label{BellisLemma}
Assume that $p_* \ge 1 \ge P$. Then $\mathcal{A}_\tau$ is a coercive
sesquilinear form on $V \times V$, i.e., there exists a constant $\alpha > 0$
such that
\[
\mathcal{A}_\tau({\boldsymbol \phi}, {\boldsymbol \phi}) \ge \alpha
\|{\boldsymbol \phi}\|^2 \quad \text{for all } {\boldsymbol \phi} \in V.
\]
\end{lemma}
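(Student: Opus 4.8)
The plan is to test the sesquilinear form on the diagonal $\boldsymbol\varphi=\boldsymbol\phi$, read off from the definition of $\mathcal A_\tau$ two manifestly nonnegative contributions, and then upgrade the control these give on $\nabla\cdot\sigma(\boldsymbol\phi)$ and on $\boldsymbol\phi$ into a lower bound on the full norm of $V$ by means of the inequality $\|\nabla\cdot\sigma(\boldsymbol\phi)\|^2+\|\boldsymbol\phi\|^2\ge\beta\|\boldsymbol\phi\|_{H^2(D)^2}^2$ recalled above. Throughout I read $\|\cdot\|$ as the $H^2(D)^2$ norm that makes $V$ a Hilbert space; the $L^2$ statement is immediate from the second term alone and needs none of the machinery below.

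First, writing $\boldsymbol g:=\nabla\cdot\sigma(\boldsymbol\phi)$, the definition of $\mathcal A_\tau$ gives directly
\[
\mathcal A_\tau(\boldsymbol\phi,\boldsymbol\phi)=\left((\rho_1-\rho_0)^{-1}(\boldsymbol g+\tau\rho_0\boldsymbol\phi),\ \boldsymbol g+\tau\rho_0\boldsymbol\phi\right)+\tau^2(\rho_0\boldsymbol\phi,\boldsymbol\phi).
\]
The hypothesis $p_*\ge 1\ge P$ combined with \eqref{pP} gives $\rho_1-\rho_0\ge p_*-P\ge 0$ pointwise, and since the coefficient $(\rho_1-\rho_0)^{-1}$ entering $\mathcal A_\tau$ is assumed bounded this gap is in fact strictly positive, so $(\rho_1-\rho_0)^{-1}$ lies between $(P_*-p)^{-1}$ and $(p_*-P)^{-1}$; in particular both terms on the right are nonnegative. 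Using the lower bounds $(\rho_1-\rho_0)^{-1}\ge(P_*-p)^{-1}$ and $\rho_0\ge p$, I obtain the two separate estimates $\mathcal A_\tau(\boldsymbol\phi,\boldsymbol\phi)\ge(P_*-p)^{-1}\|\boldsymbol g+\tau\rho_0\boldsymbol\phi\|^2$ and $\mathcal A_\tau(\boldsymbol\phi,\boldsymbol\phi)\ge\tau^2 p\,\|\boldsymbol\phi\|_{L^2}^2$.

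Next I would recover control of $\boldsymbol g$ itself rather than of the shifted quantity. Since $\boldsymbol g=(\boldsymbol g+\tau\rho_0\boldsymbol\phi)-\tau\rho_0\boldsymbol\phi$ and $\rho_0\le P$, the triangle inequality yields $\|\boldsymbol g\|^2\le 2\|\boldsymbol g+\tau\rho_0\boldsymbol\phi\|^2+2\tau^2P^2\|\boldsymbol\phi\|_{L^2}^2$, and feeding in the two estimates above gives $\|\nabla\cdot\sigma(\boldsymbol\phi)\|^2\le C_1\,\mathcal A_\tau(\boldsymbol\phi,\boldsymbol\phi)$ with $C_1$ depending only on $p,P,P_*$. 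Combining this with the $L^2$ estimate and the recalled inequality $\|\nabla\cdot\sigma(\boldsymbol\phi)\|^2+\|\boldsymbol\phi\|^2\ge\beta\|\boldsymbol\phi\|_{H^2(D)^2}^2$ then produces $\beta\|\boldsymbol\phi\|_{H^2(D)^2}^2\le(C_1+1/(\tau^2 p))\,\mathcal A_\tau(\boldsymbol\phi,\boldsymbol\phi)$, i.e. coercivity with $\alpha=\beta(C_1+1/(\tau^2 p))^{-1}$.

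I expect the main obstacle to be twofold. The substantive point is that the form controls only $\boldsymbol g+\tau\rho_0\boldsymbol\phi$, not $\boldsymbol g=\nabla\cdot\sigma(\boldsymbol\phi)$, so both nonnegative terms must be used in tandem before the Korn-type inequality can be applied; this is exactly the step where the pointwise positivity of $\rho_1-\rho_0$ furnished by the hypothesis is essential. The more delicate point is the $\tau$-dependence: the constant $\alpha$ above degenerates as $\tau\to 0$, so the estimate is genuinely a statement for fixed $\tau\ne 0$. To cover $\tau=0$, or to claim a constant uniform on bounded $\tau$-sets, one must instead establish a Poincar\'e-type bound $\|\boldsymbol\phi\|_{L^2}\le C\|\nabla\cdot\sigma(\boldsymbol\phi)\|$ on $V$; this follows from triviality of the kernel of $\nabla\cdot\sigma$ on $V$, which one checks by integrating $(\nabla\cdot\sigma(\boldsymbol\phi),\boldsymbol\phi)$ by parts, discarding the boundary term via the two constraints defining $V$, and invoking Korn's inequality to force $\varepsilon(\boldsymbol\phi)=0$ and hence $\boldsymbol\phi=0$.
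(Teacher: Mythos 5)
Your argument is essentially correct, but be aware that the paper does not prove this lemma at all: it defers entirely to \cite{BellisCakoniGuzina2013IMAAM}, so what you have written is a self-contained substitute rather than a variant of an in-paper proof. The core of your argument is sound: on the diagonal both terms of $\mathcal{A}_\tau$ are nonnegative once $\rho_1-\rho_0>0$ is secured, the second term gives $\|\boldsymbol\phi\|_{L^2}^2\le(\tau^2p)^{-1}\mathcal{A}_\tau(\boldsymbol\phi,\boldsymbol\phi)$, the triangle inequality transfers control from $\nabla\cdot\sigma(\boldsymbol\phi)+\tau\rho_0\boldsymbol\phi$ to $\nabla\cdot\sigma(\boldsymbol\phi)$ with the $\tau$-factors cancelling exactly as you claim, and the inequality $\|\nabla\cdot\sigma(\boldsymbol\phi)\|^2+\|\boldsymbol\phi\|^2\ge\beta\|\boldsymbol\phi\|^2_{H^2(D)^2}$ quoted just before the lemma upgrades this to $H^2$-coercivity. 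Two soft spots deserve mention. First, $p_*\ge 1\ge P$ together with \eqref{pP} only yields $\rho_1-\rho_0\ge p_*-P\ge 0$; strict positivity of the gap is an implicit standing assumption of the formulation (needed even to define $(\rho_1-\rho_0)^{-1}$) rather than a consequence of the stated hypothesis, and your proof really uses a uniform lower bound $\rho_1-\rho_0\ge c>0$, which you should state explicitly (your lower bound $(\rho_1-\rho_0)^{-1}\ge(P_*-p)^{-1}$ is the only one you need, and it is valid). Second, your constant $\alpha=\beta\bigl(C_1+1/(\tau^2p)\bigr)^{-1}$ degenerates as $\tau\to 0$; since $\tau=\omega^2>0$ is fixed wherever the lemma is invoked this is acceptable for the statement as written, and your proposed remedy via a Poincar\'e-type bound $\|\boldsymbol\phi\|_{L^2}\le C\|\nabla\cdot\sigma(\boldsymbol\phi)\|$ on $V$ is the right idea, though passing from triviality of the kernel (which your integration-by-parts and Korn argument establishes correctly, using both boundary conditions in the definition of $V$ and $\mu>0$, $\lambda+\mu>0$) to the quantitative bound still requires a compactness argument that you only gesture at.
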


The source problem associated with \eqref{Linear} is to find ${\boldsymbol u}
\in V$ such that, for ${\boldsymbol f} \in H^1(D)^2$,
\begin{equation}\label{source}
\mathcal{A}_\tau({\boldsymbol u}, {\boldsymbol \phi}) = (\sigma({\boldsymbol
f}), \nabla {\boldsymbol \phi})\quad\text{for all } {\boldsymbol \phi} \in V.
\end{equation}
The following theorem is a direct consequence of the Lax--Milgram Lemma.
\begin{theorem}\label{UEsource}
There exists a unique solution ${\boldsymbol u} \in V$ to \eqref{source}. Furthermore, it holds that
\begin{equation}\label{uH2norm}
\|{\boldsymbol u}\|_{H^2(D)^2} \le C \|{\boldsymbol f}\|_{H^1(D)^2}.
\end{equation}
\end{theorem}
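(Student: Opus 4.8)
The plan is to verify the three hypotheses of the Lax--Milgram lemma on the Hilbert space $V$ equipped with the $H^2(D)^2$ norm: that $\mathcal{A}_\tau$ is bounded, that it is coercive in the $H^2$ norm, and that the right-hand side of \eqref{source} defines a bounded conjugate-linear functional on $V$. Existence, uniqueness, and the bound \eqref{uH2norm} then follow at once, with the solution operator controlled by the reciprocal of the coercivity constant.

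First I would establish boundedness of $\mathcal{A}_\tau$. The only contribution not immediately estimated by $L^2$ norms is the one containing $\nabla\cdot\sigma$, which is a second-order constant-coefficient operator; hence $\|\nabla\cdot\sigma(\boldsymbol\phi)\|_{L^2}\le C\|\boldsymbol\phi\|_{H^2(D)^2}$ for $\boldsymbol\phi\in V$. Combining this with the fact that $\rho_0$ and $(\rho_1-\rho_0)^{-1}$ are bounded by \eqref{pP}, and applying the Cauchy--Schwarz inequality, yields
\[
|\mathcal{A}_\tau(\boldsymbol\phi,\boldsymbol\varphi)|\le C_\tau\,\|\boldsymbol\phi\|_{H^2(D)^2}\|\boldsymbol\varphi\|_{H^2(D)^2}
\]
with $C_\tau$ depending on $\tau,p,P,p_*,P_*$. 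This is exactly the point at which working in $V\subset H^2(D)^2$, rather than in $L^2$, is essential, since $\nabla\cdot\sigma$ is unbounded as a map on $L^2$.

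Next comes coercivity in the $H^2$ norm demanded by \eqref{uH2norm}, which is the heart of the matter. Interpreting the unlabeled norm in Lemma \ref{BellisLemma} as the $L^2(D)^2$ norm, that lemma supplies, under the hypothesis $p_*\ge 1\ge P$, the lower bound $\mathcal{A}_\tau(\boldsymbol\phi,\boldsymbol\phi)\ge\alpha\|\boldsymbol\phi\|_{L^2(D)^2}^2$. Separately, since $(\rho_1-\rho_0)^{-1}$ is positive and bounded below by \eqref{pP}, the first term of $\mathcal{A}_\tau(\boldsymbol\phi,\boldsymbol\phi)$ is bounded below by a positive multiple of $\|(\nabla\cdot\sigma+\tau\rho_0)\boldsymbol\phi\|_{L^2}^2$; a triangle inequality then converts this, together with the $L^2$ control just obtained, into a bound on $\|\nabla\cdot\sigma(\boldsymbol\phi)\|_{L^2}^2$. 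Hence
\[
\mathcal{A}_\tau(\boldsymbol\phi,\boldsymbol\phi)\ge c\big(\|\nabla\cdot\sigma(\boldsymbol\phi)\|_{L^2}^2+\|\boldsymbol\phi\|_{L^2}^2\big),
\]
and the Marsden--Hughes inequality quoted before the lemma upgrades the right-hand side to $\alpha'\|\boldsymbol\phi\|_{H^2(D)^2}^2$ for some $\alpha'>0$. This chaining, from the coercivity of Lemma \ref{BellisLemma} through the structure of the form to the Marsden--Hughes estimate, is the step I would treat most carefully.

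Finally, writing $F(\boldsymbol\phi)=(\sigma(\boldsymbol f),\nabla\boldsymbol\phi)$, I note that $\sigma(\boldsymbol f)\in L^2(D)^{2\times2}$ for $\boldsymbol f\in H^1(D)^2$ because $\sigma$ is first order, so Cauchy--Schwarz gives $|F(\boldsymbol\phi)|\le C\|\boldsymbol f\|_{H^1(D)^2}\|\boldsymbol\phi\|_{H^1(D)^2}\le C\|\boldsymbol f\|_{H^1(D)^2}\|\boldsymbol\phi\|_{H^2(D)^2}$, that is, $F\in V^*$ with $\|F\|_{V^*}\le C\|\boldsymbol f\|_{H^1(D)^2}$. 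Lax--Milgram then produces a unique $\boldsymbol u\in V$ solving \eqref{source}, and the coercivity constant delivers $\|\boldsymbol u\|_{H^2(D)^2}\le(\alpha')^{-1}\|F\|_{V^*}\le C\|\boldsymbol f\|_{H^1(D)^2}$, which is \eqref{uH2norm}. The one point to keep in mind is that $\alpha'$, and with it the constant $C$ in \eqref{uH2norm}, may depend on $\tau$; since $\tau$ is held fixed in \eqref{source}, this is harmless.
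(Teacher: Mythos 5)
Your proposal is correct and follows essentially the same route as the paper: verify boundedness and coercivity of $\mathcal{A}_\tau$ (the latter via Lemma~\ref{BellisLemma}), check that $F(\boldsymbol\phi)=(\sigma(\boldsymbol f),\nabla\boldsymbol\phi)$ is a bounded functional with $\|F\|_{V'}\le C\|\boldsymbol f\|_{H^1(D)^2}$, and conclude by Lax--Milgram. The only difference is that you spell out the upgrade from $L^2$ coercivity to $H^2$ coercivity using the Marsden--Hughes inequality, a step the paper leaves implicit by citing Lemma~\ref{BellisLemma} directly.
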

\begin{proof} It is easy to show that $\mathcal{A}_\tau$ is bounded. The
coercivity of $\mathcal{A}_\tau$ follows Lemma \ref{BellisLemma}. Let $F$ be a
linear functional on $V$ such that
\[
F({\boldsymbol \phi}):=(\sigma({\boldsymbol f}), \nabla {\boldsymbol \phi}),
\]
for all ${\boldsymbol \phi} \in V$. Then the Lax-Milgram Lemma implies that
there exists a unique solution ${\boldsymbol u}$ to the problem
\[
\mathcal{A}_\tau({\boldsymbol u}, {\boldsymbol \phi}) = F({\boldsymbol \phi})
\quad \text{for all } {\boldsymbol \phi} \in V.
\]
Moreover, we have
\[
\|{\boldsymbol u}\|_{H^2(D)^2} \le C \|F\|_{V'},
\]
where $V'$ represents the dual space of $V$. Following from the definition of
$\sigma({\boldsymbol f})$, we obtain from a simple calculation that
\[
\|F\|_{V'} \le C_{\lambda, \mu} \|{\boldsymbol f}\|_{H^1(D)^2},
\]
which shows the estimate \eqref{uH2norm} and completes the proof. 
\end{proof}

\begin{rema}
In the rest of the paper, we assume that the following regularity for ${\boldsymbol u}$ holds
\begin{equation}\label{FourthReg}
  \|{\boldsymbol u}\|_{H^{2+\xi}(\Omega)^2}\leq C \|{\boldsymbol
f}\|_{H^1(D)^2}.
\end{equation}
Note that a similar regularity holds for the biharmonic equation
\cite{BR1980,Grisvard1985, BrennerMonkSun2015LNCSE}
where the elliptic regularity $\xi \in (\frac{1}{2},1]$ is determined by the
angles at the corners of $D$ and $\xi=1$ if $D$ is convex.
 \end{rema}

It follows from Theorem \ref{UEsource} that there exists a solution operator
$T: H^1(D)^2 \to V$ such that
 \[
 {\boldsymbol u} = T{\boldsymbol f}.
 \]
Clearly, the operator $T$ is self-adjoint since $\mathcal{A}_\tau$ is
symmetric; $T$ is also a compact operator due to the compact embedding of
$H^2(D)^2$ into $H^1(D)^2$ (see, e.g., Theorem 1.2.1 of \cite{SunZhou2016}).
The generalized eigenvalue problem \eqref{Linear} has the following equivalent
operator form
\[
{\boldsymbol u} = \eta T{\boldsymbol u},\quad \text{where } \eta= \gamma^{-1}.
\]

From classical spectral theory of compact self-adjoint operators, i.e., the
Hilbert-Schmidt theory, $T$ has at most a countable set of real eigenvalues and
$0$ is the only possible accumulation point. Consequently, we have the following
lemma for the generalized eigenvalue value problem \eqref{Linear}.

\begin{lemma}
Let $\rho_0$ and $\rho_1$ satisfy \eqref{pP} such that the condition in Lemma \eqref{BellisLemma} is fulfilled. 
Then the generalized eigenvalue value problem \eqref{Linear} has at most a countable set of positive eigenvalues 
and $+\infty$ is the only possible accumulation point.
\end{lemma}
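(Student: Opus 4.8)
The plan is to transfer the assertion to the spectral theory of the compact self-adjoint solution operator $T$ that has already been introduced, so that the countability and accumulation follow from the Hilbert--Schmidt theory invoked above, while positivity of the eigenvalues follows from the coercivity of $\mathcal{A}_\tau$ and the positive definiteness of $\mathcal{B}$. First I would record the precise correspondence between \eqref{Linear} and $T$: a pair $({\boldsymbol w},\gamma)$ solves \eqref{Linear} exactly when $T{\boldsymbol w}=\eta{\boldsymbol w}$ with $\eta=\gamma^{-1}$. Indeed, from \eqref{Linear} we have $\mathcal{B}({\boldsymbol w},{\boldsymbol\varphi})=\gamma^{-1}\mathcal{A}_\tau({\boldsymbol w},{\boldsymbol\varphi})$, and comparing this with the defining relation $\mathcal{A}_\tau(T{\boldsymbol w},{\boldsymbol\phi})=\mathcal{B}({\boldsymbol w},{\boldsymbol\phi})$ from \eqref{source} gives $\mathcal{A}_\tau(T{\boldsymbol w}-\gamma^{-1}{\boldsymbol w},{\boldsymbol\phi})=0$ for all ${\boldsymbol\phi}\in V$, whence $T{\boldsymbol w}=\gamma^{-1}{\boldsymbol w}$ by the coercivity of $\mathcal{A}_\tau$. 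Thus the eigenvalues of \eqref{Linear} are precisely the reciprocals of the nonzero eigenvalues of $T$, and it suffices to analyze the spectrum of $T$.

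By the Hilbert--Schmidt theorem, already invoked for the compact self-adjoint operator $T$, the set of eigenvalues of $T$ is at most countable, consists of real numbers, and has $0$ as its only possible accumulation point. Moreover $T$ is injective: if $T{\boldsymbol f}={\boldsymbol 0}$ then $\mathcal{B}({\boldsymbol f},{\boldsymbol\phi})=0$ for every ${\boldsymbol\phi}\in V$, and choosing ${\boldsymbol\phi}={\boldsymbol f}$ gives $\mathcal{B}({\boldsymbol f},{\boldsymbol f})=0$, hence ${\boldsymbol f}={\boldsymbol 0}$ by the positive definiteness of $\mathcal{B}$ established in the next step. Consequently $0$ is not an eigenvalue of $T$, so each eigenvalue $\eta$ of $T$ yields a genuine finite eigenvalue $\gamma=\eta^{-1}$ of \eqref{Linear}.

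The main step is to show that every eigenvalue $\eta$ of $T$ is strictly positive. Let $T{\boldsymbol w}=\eta{\boldsymbol w}$ with ${\boldsymbol w}\neq{\boldsymbol 0}$. Testing \eqref{source} (with ${\boldsymbol f}={\boldsymbol w}$) against ${\boldsymbol\phi}={\boldsymbol w}$ and using $T{\boldsymbol w}=\eta{\boldsymbol w}$ yields the Rayleigh quotient
\[
\eta=\frac{\mathcal{B}({\boldsymbol w},{\boldsymbol w})}{\mathcal{A}_\tau({\boldsymbol w},{\boldsymbol w})}.
\]
The denominator is strictly positive by the coercivity of $\mathcal{A}_\tau$ from Lemma \ref{BellisLemma}. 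For the numerator, formula \eqref{sigmaugv} gives
\[
\mathcal{B}({\boldsymbol w},{\boldsymbol w})=\int_D\bigl(2\mu\,\varepsilon({\boldsymbol w}):\varepsilon({\boldsymbol w})+\lambda\,(\nabla\cdot{\boldsymbol w})^2\bigr)\,{\rm d}{\boldsymbol x}.
\]
A standard decomposition of $\varepsilon({\boldsymbol w})$ into its deviatoric and spherical parts, together with $\mu>0$ and $\lambda+\mu>0$, shows that the integrand is nonnegative and vanishes only where $\varepsilon({\boldsymbol w})={\bf 0}$; since ${\boldsymbol w}\in V\subset H_0^1(D)^2$, the first Korn inequality then forces ${\boldsymbol w}={\boldsymbol 0}$. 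Hence $\mathcal{B}({\boldsymbol w},{\boldsymbol w})>0$ and therefore $\eta>0$.

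Collecting these facts, the eigenvalues of $T$ form an at most countable set of strictly positive numbers whose only possible accumulation point is $0$; passing to reciprocals, the generalized eigenvalue problem \eqref{Linear} has an at most countable set of positive eigenvalues $\gamma=\eta^{-1}$ whose only possible accumulation point is $+\infty$. The only delicate point is the positive definiteness of $\mathcal{B}$ in the third step, where the Lam\'e conditions $\mu>0$, $\lambda+\mu>0$ and the first Korn inequality are essential; everything else is a direct application of the compact self-adjoint spectral theory already set up in the excerpt.
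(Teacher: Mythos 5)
Your argument is correct and follows essentially the same route as the paper: the paper obtains this lemma directly from the Hilbert--Schmidt theory applied to the compact self-adjoint solution operator $T$ and offers no further detail, and you have simply filled in the steps it leaves implicit (the correspondence $\gamma=\eta^{-1}$ and, most usefully, the positivity of the eigenvalues via the Rayleigh quotient together with the deviatoric/spherical splitting of $\varepsilon({\boldsymbol w})$ under $\mu>0$, $\lambda+\mu>0$). The only cosmetic caveat is that your injectivity claim for $T$ should be restricted to $V$ rather than all of $H^1(D)^2$ (rigid motions lie in the kernel of ${\boldsymbol f}\mapsto\mathcal{B}({\boldsymbol f},\cdot)$, and the test choice ${\boldsymbol\phi}={\boldsymbol f}$ requires ${\boldsymbol f}\in V$), but since the eigenfunctions of \eqref{Linear} live in $V$ this does not affect the conclusion.
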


Roughly speaking, to compute real  transmission eigenvalues, one needs to computes the
roots of the nonlinear function $f$. The values of $f(\tau)$ are
generalized eigenvalues of \eqref{Linear}, which is approximated by the $H^2$-conforming Argyris element.

\section{A conforming finite element method}

In this section, we propose a conforming finite element for \eqref{Linear}. 
The convergence of the source problem \eqref{source} is established first. The theory of Babu\v{s}ka and Osborn
\cite{BabuskaOsborn1991} is then applied to obtain the convergence of the eigenvalue problem \eqref{Linear}.

Let $\mathcal{T}$ be a regular triangular mesh for $D$ and $K \in \mathcal{T}$ be a triangle. We employ the
$H^2$-conforming Argyris element, which uses $\mathcal{P}_5$ - the set of polynomials of
degree up to $5$ on $K$, to discretize \eqref{Linear}. Note that $\text{dim}(\mathcal{P}_5) = 21$. For $\mathcal{N}=\{N_1,
\ldots, N_{21}\}$, $3$ degrees of freedom are the values at the vertices of
$K$, $6$ degrees of freedom are the values of the first order partial
derivatives at the vertices of $K$, $9$ degrees of freedoms are the values of
the second order derivatives at the vertices of $K$,  and $3$ degrees of freedom
are the values of the normal derivatives at the midpoints
of three edges of $K$ \cite{BrennerScott2002}.

Note that the Argyris element does not belong to the affine families. This is
due to the fact that normal derivatives are used as degrees of
freedom. Fortunately, their interpolation properties are quite similar to those
of affine families. Hence the Argyris element is referred to be almost-affine
element. Let $v \in H^2(D)$ and $I_hv$ be the interpolation of $v$ by the
Argyris element. For $v \in H^{1+\alpha}(D)$, $\alpha > 0$, the following interpolation result holds (see, e.g., \cite{Ciarlet2002})
\begin{equation}\label{ArgyrisInter}
\| v - I_h v\|_{H^2(D)} \le C h^{s-1}|v|_{H^{s+1}(D)},
\end{equation}
where $1 \le s \le \min\{5, 1+\alpha\}$ depending regularity of $v$.

Let $V_h$ be the Argyris finite element space associated with $\mathcal T$. The
discrete problem for \eqref{source} is to find ${\boldsymbol u}_h \in V_h$ such that
\begin{equation}\label{SourceDis}
\mathcal{A}_\tau({\boldsymbol u}_h, {\boldsymbol \phi}_h) = (\sigma({\boldsymbol
f}), \nabla {\boldsymbol \phi}_h)\quad \text{for all } {\boldsymbol \phi}_h \in
V_h.
\end{equation}
The existence of a unique solution ${\boldsymbol u}_h$ to \eqref{SourceDis} is
the same as the continuous problem. As a consequence, there exists a discrete
solution operator
$T_h: H^1(D)^2 \to H^2(D)^2$ such that
\[
 {\boldsymbol u}_h = T_h{\boldsymbol f}.
\]

\begin{theorem}
Let ${\boldsymbol u}$ and ${\boldsymbol u}_h$ be the solutions of the continuous
problem \eqref{source} and discrete problem \eqref{SourceDis}, respectively.
Then the following error estimate holds
\[
\|{\boldsymbol u}-{\boldsymbol u}_h\|_{H^1(D)^2} \le C h^{2\alpha} \|{\boldsymbol f}\|_{H^1(D)^2}.
\]
\end{theorem}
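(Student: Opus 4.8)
The plan is to establish the $H^1$ error estimate for the source problem through the standard Aubin--Nitsche duality argument, combined with the quasi-optimal $H^2$ approximation from C\'ea's Lemma. The target rate $h^{2\alpha}$ strongly suggests that the regularity exponent is $\xi=\alpha$ in \eqref{FourthReg}, so that each application of duality contributes one factor of $h^\alpha$, and two such factors arise from the two ``powers'' of the problem---the energy estimate and the dual estimate.

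First I would apply C\'ea's Lemma to the coercive, bounded form $\mathcal{A}_\tau$ on $V$. Since the discrete problem \eqref{SourceDis} is the Galerkin projection of \eqref{source} with the same bilinear form, coercivity (Lemma~\ref{BellisLemma}) and boundedness give the quasi-best-approximation bound
\[
\|{\boldsymbol u}-{\boldsymbol u}_h\|_{H^2(D)^2} \le C \inf_{{\boldsymbol \phi}_h \in V_h} \|{\boldsymbol u}-{\boldsymbol \phi}_h\|_{H^2(D)^2}.
\]
Choosing ${\boldsymbol \phi}_h = I_h {\boldsymbol u}$ the Argyris interpolant and invoking the interpolation estimate \eqref{ArgyrisInter} with $s-1=\alpha$, together with the regularity \eqref{FourthReg} (with $\xi=\alpha$), yields the energy-norm estimate $\|{\boldsymbol u}-{\boldsymbol u}_h\|_{H^2(D)^2} \le C h^{\alpha}\|{\boldsymbol f}\|_{H^1(D)^2}$.

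Next I would set up the duality argument to gain the extra power of $h^\alpha$ when measuring the error in the weaker $H^1$ norm. Let ${\boldsymbol e} = {\boldsymbol u}-{\boldsymbol u}_h$ and introduce the auxiliary dual problem: find ${\boldsymbol z} \in V$ such that $\mathcal{A}_\tau({\boldsymbol \phi}, {\boldsymbol z}) = (\sigma({\boldsymbol e}), \nabla {\boldsymbol \phi})$ for all ${\boldsymbol \phi}\in V$, where the right-hand side is chosen so that testing with ${\boldsymbol \phi}={\boldsymbol e}$ reproduces (a form equivalent to) the $H^1$ norm of ${\boldsymbol e}$ via the Korn inequality and \eqref{sigmaugv}. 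By the symmetry of $\mathcal{A}_\tau$ and Theorem~\ref{UEsource} applied with data $\sigma({\boldsymbol e})$, the dual solution satisfies the elliptic regularity bound $\|{\boldsymbol z}\|_{H^{2+\alpha}(D)^2}\le C\|{\boldsymbol e}\|_{H^1(D)^2}$. Exploiting Galerkin orthogonality $\mathcal{A}_\tau({\boldsymbol e}, {\boldsymbol \phi}_h)=0$, I would write
\[
(\sigma({\boldsymbol e}), \nabla {\boldsymbol e}) = \mathcal{A}_\tau({\boldsymbol e}, {\boldsymbol z}) = \mathcal{A}_\tau({\boldsymbol e}, {\boldsymbol z}-I_h{\boldsymbol z}) \le C\|{\boldsymbol e}\|_{H^2(D)^2}\,\|{\boldsymbol z}-I_h{\boldsymbol z}\|_{H^2(D)^2}.
\]
Bounding $\|{\boldsymbol z}-I_h{\boldsymbol z}\|_{H^2(D)^2}\le C h^\alpha \|{\boldsymbol z}\|_{H^{2+\alpha}(D)^2}\le Ch^\alpha\|{\boldsymbol e}\|_{H^1(D)^2}$ and combining with the energy estimate gives $\|{\boldsymbol e}\|_{H^1(D)^2}^2 \le C h^{2\alpha}\|{\boldsymbol f}\|_{H^1(D)^2}\|{\boldsymbol e}\|_{H^1(D)^2}$, and dividing through delivers the claimed rate.

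**Main obstacle:**

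The delicate point is the duality step, specifically verifying that the dual problem with data $\sigma({\boldsymbol e})$ inherits the full elliptic regularity $\|{\boldsymbol z}\|_{H^{2+\alpha}}\le C\|{\boldsymbol e}\|_{H^1}$ needed to extract the second factor of $h^\alpha$. One must confirm that $\sigma({\boldsymbol e})$ represents an admissible right-hand side so that \eqref{FourthReg} applies, and that the left-hand functional $(\sigma({\boldsymbol e}),\nabla{\boldsymbol e})$ genuinely controls $\|{\boldsymbol e}\|_{H^1(D)^2}^2$ from below---this is exactly where the Korn inequality is indispensable, since by \eqref{sigmaugv} the quantity $(\sigma({\boldsymbol e}),\nabla{\boldsymbol e})$ equals $\int_D(2\mu|\varepsilon({\boldsymbol e})|^2+\lambda|\nabla\cdot{\boldsymbol e}|^2)\,{\rm d}{\boldsymbol x}$, which dominates $\|\varepsilon({\boldsymbol e})\|_{L^2}^2$ and hence $\|{\boldsymbol e}\|_{H^1}^2$. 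Handling the non-affine nature of the Argyris element in the interpolation estimates is routine given \eqref{ArgyrisInter}, and the $\tau$-dependence of the constants is harmless since $\tau$ is fixed throughout.
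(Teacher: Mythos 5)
Your proposal is correct and follows essentially the same route as the paper: C\'ea's Lemma plus the Argyris interpolation estimate \eqref{ArgyrisInter} and the regularity \eqref{FourthReg} give the $O(h^\alpha)$ energy-norm bound, and an Aubin--Nitsche duality argument with Galerkin orthogonality supplies the second factor of $h^\alpha$. Your instantiation of the duality step---taking the dual data to be $\sigma({\boldsymbol e})$ itself and invoking Korn's inequality so that $(\sigma({\boldsymbol e}),\nabla{\boldsymbol e})$ controls $\|{\boldsymbol e}\|_{H^1}^2$ from below---is in fact a slightly cleaner version of the paper's argument, which instead takes a supremum over data ${\boldsymbol g}\in H^1(D)^2$ and is somewhat loose in passing from $(\sigma({\boldsymbol g}),\nabla{\boldsymbol w})$ to the pairing $({\boldsymbol u}-{\boldsymbol u}_h,{\boldsymbol g})$.
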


\begin{proof}
From C\'{e}a's Lemma, the following error estimate holds
\[
\|{\boldsymbol u}-{\boldsymbol u}_h\|_{H^2(D)^2} \le C \inf_{{\boldsymbol v}_h \in V_h} \|{\boldsymbol u}-{\boldsymbol v}_h\|_{H^2(D)^2},
\]
for some constant $C$. Using \eqref{ArgyrisInter} and \eqref{FourthReg}, one has that
\[
\|{\boldsymbol u}-{\boldsymbol u}_h\|_{H^2(D)^2} \le C h^{s-1}|{\boldsymbol u}|_{H^{s+1}(D)^2}
= Ch^\alpha |{\boldsymbol u}|_{H^{2+\alpha}(D)^2} \le C h^\alpha \|{\boldsymbol f}\|_{H^1(D)^2}.
\]
For ${\boldsymbol g} \in H_0^1(D)^2$, let ${\boldsymbol \phi}_{\boldsymbol g}$ be the unique solution of
\[
\mathcal{A}_\tau({\boldsymbol \phi}_g, {\boldsymbol \phi}) = (\sigma({\boldsymbol g}), \nabla {\boldsymbol \phi}) \quad \text{for all } {\boldsymbol \phi}  \in V.
\]
The rest of the proof follows the Aubin--Nitsche Lemma (see, e.g., Theorem 3.2.4
of \cite{SunZhou2016}) with suitable choices of Sobolev spaces.
Let ${\boldsymbol w} := {\boldsymbol u}-{\boldsymbol u}_h$ and ${\boldsymbol g}
\in H^1(D)^2$. Using the Galerkin orthogonality, we have for any ${\boldsymbol
v}_h \in V_h$ that
\begin{align*}
(\sigma({\boldsymbol g}), \nabla {\boldsymbol w}) &=
\mathcal{A}_\tau({\boldsymbol \phi}_g,  {\boldsymbol u}-{\boldsymbol u}_h) \\
			&=  \mathcal{A}_\tau({\boldsymbol \phi}_g - {\boldsymbol
v}_h,  {\boldsymbol u}-{\boldsymbol u}_h) \\
			&\le C \| {\boldsymbol \phi}_g - {\boldsymbol v}_h
\|_{H^2} \| {\boldsymbol u}-{\boldsymbol u}_h\|_{H^2},
\end{align*}
which yields 
\[
(\sigma({\boldsymbol g}), \nabla {\boldsymbol w}) \le C  \| {\boldsymbol u}-{\boldsymbol u}_h\|_{H^2} \inf_{{\boldsymbol v}_h \in V_h} \| {\boldsymbol \phi}_g - {\boldsymbol v}_h \|_{H^2}.
\]
Furthermore,
\begin{eqnarray*}
\| {\boldsymbol u}-{\boldsymbol u}_h\|_{H^1} &=& \sup_{{\boldsymbol g} \in H^1(D)^2, {\boldsymbol g} \ne {\boldsymbol 0}}
\frac{( {\boldsymbol u}-{\boldsymbol u}_h, {\boldsymbol g})}{\|{\boldsymbol g}\|_{H^1}} \\
&\le&  C  \| {\boldsymbol u}-{\boldsymbol u}_h\|_{H^2}
 \sup_{{\boldsymbol g} \in H^1(D)^2, {\boldsymbol g} \ne {\boldsymbol 0}}
 \left\{ \inf_{{\boldsymbol v}_h \in V_h} \frac{\| {\boldsymbol \phi}_g - {\boldsymbol v}_h \|}{\|{\boldsymbol g}\|_{H^1}}\right\}.
\end{eqnarray*}
Consequently, we get
\[
\|{\boldsymbol u}-{\boldsymbol u}_h\|_{H^1(D)^2} \le C h^{2\alpha} \|{\boldsymbol f}\|_{H^1(D)^2}.
\]
which completes the proof.
\end{proof}

Using operators $T$ and $T_h$, we can rewrite the above error estimate as
 \[
 \|T{\boldsymbol f}-T_h{\boldsymbol f} \| \le C h^{2\alpha} \|{\boldsymbol f}\|_{H^1(D)^2}.
 \]
Thus we have
\[
\|T-T_h\| \le Ch^{2\alpha}.
\]

Now we consider the discrete eigenvalue value problem: Find $\gamma_h \in
\mathbb R$ such that
\begin{equation}\label{EigDis}
\mathcal{A}_\tau({\boldsymbol u}_h, {\boldsymbol \phi}_h) =
\gamma_h(\sigma({\boldsymbol u}_h), \nabla {\boldsymbol \phi}_h) \quad \text{for
all } {\boldsymbol \phi}_h \in V_h.
\end{equation}

Since both $\mathcal{A}_\tau$ and $\mathcal{B}$ are symmetric, $T$ is
self-adjoint. Similarly, $T_h$ is symmetric.
The estimate of eigenvalue problem follows directly from the theory of
Babu\v{s}ka and Osborn \cite{BabuskaOsborn1991}.

\begin{theorem}\label{convergeorder}
Let $\gamma$ be a generalized eigenvalue of \eqref{Linear} with algebraic
multiplicity $m$. Let $\gamma_{h, 1}, \ldots, \gamma_{h, m}$ be the $m$
eigenvalues of \eqref{EigDis} approximating $\mu$. Define $\hat{\gamma}_h =
\frac{1}{m}\sum_{j=1}^m \gamma_{h, j}$. The following estimate holds
\[
|\gamma - \hat{\gamma}_h| \le Ch^{2\alpha},
\]
where $C>0$ is a constant. 
\end{theorem}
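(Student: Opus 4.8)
The plan is to recast both eigenvalue problems in the operator-theoretic framework already set up and then invoke the spectral approximation theory of Babu\v{s}ka and Osborn. First I would pass from the generalized eigenvalues to the eigenvalues of the solution operators: a value $\gamma$ solves \eqref{Linear} with eigenfunction $\boldsymbol w$ if and only if $\eta := \gamma^{-1}$ is an eigenvalue of the compact self-adjoint operator $T$ with $\boldsymbol w = \eta T \boldsymbol w$, and likewise each discrete eigenvalue $\gamma_{h,j}$ of \eqref{EigDis} corresponds to an eigenvalue $\eta_{h,j} = \gamma_{h,j}^{-1}$ of $T_h$. Thus $\gamma$ has algebraic multiplicity $m$ as a generalized eigenvalue exactly when $\eta$ has algebraic multiplicity $m$ for $T$, and since $T$ is self-adjoint its ascent equals one.

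The hypotheses of the Babu\v{s}ka--Osborn theory are already in place: $T$ is compact (the compact embedding $H^2(D)^2 \hookrightarrow H^1(D)^2$) and self-adjoint (symmetry of $\mathcal{A}_\tau$ and $\mathcal{B}$), and the operators converge in norm, $\|T - T_h\| \le C h^{2\alpha}$, as established above. Let $E$ be the spectral projection onto the $m$-dimensional eigenspace $M = R(E)$ associated with $\eta$, and fix an orthonormal basis $\{\boldsymbol\phi_1, \ldots, \boldsymbol\phi_m\}$ of $M$. Writing $\hat\eta_h = \frac{1}{m}\sum_{j=1}^m \eta_{h,j}$ for the average of the $m$ discrete eigenvalues clustering at $\eta$, the main eigenvalue estimate of \cite{BabuskaOsborn1991} for self-adjoint problems gives
\[
|\eta - \hat\eta_h| \le C\left( \frac{1}{m}\sum_{i=1}^m \left|\left((T - T_h)\boldsymbol\phi_i, \boldsymbol\phi_i\right)\right| + \|(T - T_h)|_M\|^2 \right) \le C\|T - T_h\| \le C h^{2\alpha}.
\]

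It remains to transfer this bound from $\eta$ back to $\gamma$. Since $\eta$ is a fixed nonzero number and $\eta_{h,j} \to \eta$, the discrete eigenvalues are bounded uniformly away from zero for $h$ sufficiently small, so the map $x \mapsto x^{-1}$ is Lipschitz on a neighborhood containing all $\eta_{h,j}$. Writing $\eta_{h,j} = \eta + \delta_j$ with $\delta_j = O(h^{2\alpha})$ and expanding $\gamma_{h,j} = \eta_{h,j}^{-1} = \eta^{-1} - \eta^{-2}\delta_j + O(\delta_j^2)$, the averages satisfy $\hat\gamma_h = \gamma - \eta^{-2}(\hat\eta_h - \eta) + O(h^{4\alpha})$; hence $|\gamma - \hat\gamma_h| \le \eta^{-2}|\eta - \hat\eta_h| + O(h^{4\alpha}) \le C h^{2\alpha}$.

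The step I expect to require the most care is this last transfer: because $\hat\gamma_h$ is the arithmetic mean of the reciprocals $\eta_{h,j}^{-1}$ rather than the reciprocal of the mean $\hat\eta_h$, one must verify that the difference between these two quantities is of order $h^{4\alpha}$ and is therefore absorbed into the claimed rate. This hinges on the nonlinearity $x \mapsto x^{-1}$ contributing only a quadratic correction once each individual discrete eigenvalue is known to lie within $O(h^{2\alpha})$ of $\eta$, which in turn follows from the same Babu\v{s}ka--Osborn framework applied to each $\eta_{h,j}$ (ascent one) together with the fact that $\eta$ is isolated in the spectrum of $T$.
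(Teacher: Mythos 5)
Your proposal is correct and follows the same route as the paper, which offers no written proof but simply invokes the Babu\v{s}ka--Osborn theory for the compact self-adjoint operators $T$ and $T_h$ together with the norm convergence $\|T-T_h\|\le Ch^{2\alpha}$ established just before the theorem. Your elaboration --- passing to $\eta=\gamma^{-1}$, applying the self-adjoint eigenvalue estimate, and carefully transferring the bound back through the reciprocal map with the $O(h^{4\alpha})$ correction --- is exactly the argument the paper leaves implicit.
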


\begin{rema}
The boundary conditions for $V$ defined in \eqref{spaceV} need special treatment. The detail of how to impose the boundary conditions is shown in Appendix~B.
\end{rema}

\section{An iterative method}

Now we turn to the problem of how to compute the root(s) of
the nonlinear function $f(\tau)$ defined in \eqref{functionf}. In this section, we assume that $\rho_0$ and $\rho_1$ are constants
and consider the case when $\gamma(\tau)$ is the smallest eigenvalue of \eqref{Linear}. Similar result holds for other eigenvalues.
The continuity of $f$ is clear since the generalized eigenvalue $\gamma(\tau)$ of \eqref{Linear} depends on $\tau$ continuously.
The following lemma is shown in \cite{BellisCakoniGuzina2013IMAAM}. It is written in a slightly different way to better serve the current paper.
\begin{lemma}\label{fcontinuous}
Let $\tau_0 > 0$ be small enough and $\tau_1>0$ be large enough. Then the nonlinear function $f$ is continuous and has at least one root in $[\tau_0, \tau_1]$.
\end{lemma}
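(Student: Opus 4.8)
The plan is to use the intermediate value theorem on the continuous function $f(\tau) = \gamma(\tau) - \tau$, which requires finding a sign change on $[\tau_0, \tau_1]$. Since continuity of $f$ is already granted, the entire task reduces to establishing two asymptotic facts about the smallest generalized eigenvalue $\gamma(\tau)$: first, that $f(\tau_0) > 0$ for small $\tau_0 > 0$, and second, that $f(\tau_1) < 0$ for large $\tau_1$. Equivalently, I want $\gamma(\tau) > \tau$ when $\tau$ is near zero and $\gamma(\tau) < \tau$ when $\tau$ is large.

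First I would analyze the behavior of $f$ near $\tau = 0$. By the Courant--Fischer characterization, the smallest eigenvalue is
\[
\gamma(\tau) = \min_{{\boldsymbol \phi} \in V, {\boldsymbol \phi} \ne {\bf 0}} \frac{\mathcal{A}_\tau({\boldsymbol \phi}, {\boldsymbol \phi})}{\mathcal{B}({\boldsymbol \phi}, {\boldsymbol \phi})}.
\]
As $\tau \to 0^+$, the form $\mathcal{A}_\tau$ converges to $\mathcal{A}_0({\boldsymbol \phi}, {\boldsymbol \phi}) = \left((\rho_1-\rho_0)^{-1}(\nabla\cdot\sigma){\boldsymbol \phi}, (\nabla\cdot\sigma){\boldsymbol \phi}\right)$, which is strictly positive for nonzero ${\boldsymbol \phi} \in V$; hence $\gamma(0) > 0$ is bounded away from zero. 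Since $f(\tau_0) = \gamma(\tau_0) - \tau_0 \to \gamma(0) > 0$ as $\tau_0 \to 0^+$, choosing $\tau_0$ small enough guarantees $f(\tau_0) > 0$. I would make the continuity of $\gamma$ in $\tau$ rigorous by noting that $\mathcal{A}_\tau$ depends continuously (indeed polynomially) on $\tau$ and invoking the standard perturbation result for the Rayleigh quotient of a coercive symmetric form.

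Next I would show $f(\tau_1) < 0$ for large $\tau_1$, which is the substantive half. The idea is to exhibit a fixed test function and show that the Rayleigh quotient defining $\gamma(\tau)$ grows strictly slower than $\tau$. A clean choice is to pick any nonzero ${\boldsymbol \phi} \in V$ and examine the large-$\tau$ expansion of the numerator $\mathcal{A}_\tau({\boldsymbol \phi},{\boldsymbol \phi})$: its leading term is $\tau^2\left((\rho_1-\rho_0)^{-1}\rho_0{\boldsymbol \phi}, \rho_0{\boldsymbol \phi}\right) + \tau^2(\rho_0{\boldsymbol \phi},{\boldsymbol \phi})$, which under the hypotheses grows like $\tau^2$, not faster. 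If one can select ${\boldsymbol \phi}$ making this $O(\tau^2)$ coefficient vanish or at least controllable relative to $\tau \,\mathcal{B}({\boldsymbol \phi},{\boldsymbol \phi})$, then $\gamma(\tau) \le \mathcal{A}_\tau({\boldsymbol \phi},{\boldsymbol \phi})/\mathcal{B}({\boldsymbol \phi},{\boldsymbol \phi})$ can be made smaller than $\tau$. I expect the main obstacle to lie exactly here: controlling the quadratic-in-$\tau$ growth of $\mathcal{A}_\tau$ so that $\gamma(\tau)/\tau \to 0$ or at least stays below $1$. The referenced argument in \cite{BellisCakoniGuzina2013IMAAM} presumably resolves this by relating $\gamma(\tau)$ to the transmission eigenvalues of the original system and using the sign condition $p_* \ge 1 \ge P$, so I would follow that route, carefully extracting a test function for which the dominant balance is between $\tau\mathcal{B}$ and the positive part of $\mathcal{A}_\tau$. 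Once both sign conditions $f(\tau_0) > 0$ and $f(\tau_1) < 0$ are in hand, the intermediate value theorem applied to the continuous $f$ on $[\tau_0, \tau_1]$ yields a root, completing the proof.
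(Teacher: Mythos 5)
Your overall strategy (intermediate value theorem after establishing $f>0$ near $0$ and $f<0$ for large $\tau$) is the standard one, and it is what the cited reference does; note that the paper itself offers no proof of this lemma, attributing it entirely to \cite{BellisCakoniGuzina2013IMAAM}, so the comparison is with that argument. Your first half is essentially fine: $\mathcal{A}_0$ is coercive on $V$ (combine the Korn inequality with the estimate $\|\nabla\cdot\sigma(\boldsymbol\phi)\|^2+\|\boldsymbol\phi\|^2\ge\beta\|\boldsymbol\phi\|_{H^2}^2$ and the identity $(\sigma(\boldsymbol\phi),\nabla\boldsymbol\phi)=-(\nabla\cdot\sigma(\boldsymbol\phi),\boldsymbol\phi)$ valid for $\boldsymbol\phi\in V$), so $\gamma(0)>0$ and $f(\tau_0)>0$ for $\tau_0$ small.

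The genuine gap is in the second half, and your own diagnosis of "the main obstacle" is exactly where the proposal fails rather than succeeds. For any \emph{fixed} nonzero $\boldsymbol\phi\in V$ the coefficient of $\tau^2$ in $\mathcal{A}_\tau(\boldsymbol\phi,\boldsymbol\phi)$ is, after the same simplification the paper carries out in the proof of Theorem~\ref{decreasingfunc}, equal to $\bigl(\tfrac{\rho_0\rho_1}{\rho_1-\rho_0}\boldsymbol\phi,\boldsymbol\phi\bigr)$, which is strictly positive under the standing assumptions; it cannot be made to vanish or even to be small relative to $\tau\,\mathcal{B}(\boldsymbol\phi,\boldsymbol\phi)$ by any choice of a $\tau$-independent test function. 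Hence $\mathcal{A}_\tau(\boldsymbol\phi,\boldsymbol\phi)/\mathcal{B}(\boldsymbol\phi,\boldsymbol\phi)$ grows like $\tau^2$ and this upper bound for $\gamma(\tau)$ is useless for large $\tau$; deferring to the reference at this point is not a proof. The actual argument requires a $\tau$-\emph{dependent} test function: take a small ball $B_r\subset D$, show by separation of variables (the Bessel-function computation of Appendix~A) that $B_r$ has a real transmission eigenvalue $\tau(r)$ with eigenfunction $\boldsymbol w_r$, extend $\boldsymbol w_r$ by zero to an element of $V(D)$ (the extension is $H^2$ because both Cauchy data vanish on $\partial B_r$), and observe that $\mathcal{A}_{\tau(r)}(\boldsymbol w_r,\boldsymbol w_r)=\tau(r)\,\mathcal{B}(\boldsymbol w_r,\boldsymbol w_r)$, whence $\gamma(\tau(r))\le\tau(r)$, i.e.\ $f(\tau(r))\le 0$. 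Since $\tau(r)\to\infty$ as $r\to 0$ by scaling, one obtains arbitrarily large values $\tau_1$ with $f(\tau_1)\le 0$, and the intermediate value theorem then closes the argument. Without this localization step your proof is incomplete at its most substantive point.
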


In fact, $f$ is differentiable and the derivative is negative on an interval given in Theorem~\ref{decreasingfunc}.
We first recall the elasticity eigenvalue problem which will be used in the proof (see, e.g., \cite{BabuskaOsborn1991}). 
Find non-trivial eigenpair $(\delta, {\boldsymbol u}) \in \mathbb R \times \in H_0^1(D)^2$
such that
\begin{equation}\label{EEweak}
\int_D \left( 2\mu {\boldsymbol \epsilon}({\boldsymbol u}):{\boldsymbol \epsilon}({\boldsymbol u})+
\lambda \text{div}{\boldsymbol u} \text{div}{\boldsymbol v} \right) \text{d}{\boldsymbol x} = \delta \int_D {\boldsymbol u} {\boldsymbol v} \text{d}{\boldsymbol x}
\quad \text{for all }  {\boldsymbol v} \in H_0^1(D)^2.
\end{equation}

\begin{theorem}\label{decreasingfunc}
Let $\delta_1$ be the smallest elasticity eigenvalue.
The function $f(\tau)$ is differentiable. Furthermore,
$f(\tau)$ is a decreasing function on $\left(0, 
\frac{\delta_1(\rho_0+\rho_1)}{2\rho_0 \rho_1}\right)$.
\end{theorem}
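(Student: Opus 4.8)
The plan is to realize $\gamma(\tau)$ as the bottom of the Rayleigh quotient for the pair $(\mathcal{A}_\tau,\mathcal{B})$, to differentiate along the eigenbranch via a Feynman--Hellmann identity, and to close the estimate with the variational characterization of $\delta_1$. First observe that $\mathcal{B}({\boldsymbol w},{\boldsymbol w})=(\sigma({\boldsymbol w}),\nabla{\boldsymbol w})=\int_D\big(2\mu\,\varepsilon({\boldsymbol w}):\varepsilon({\boldsymbol w})+\lambda|\nabla\cdot{\boldsymbol w}|^2\big)\,{\rm d}{\boldsymbol x}>0$ for ${\boldsymbol w}\neq{\boldsymbol 0}$ by the Korn inequality, so that
\[
\gamma(\tau)=\min_{{\boldsymbol 0}\neq{\boldsymbol w}\in V}\frac{\mathcal{A}_\tau({\boldsymbol w},{\boldsymbol w})}{\mathcal{B}({\boldsymbol w},{\boldsymbol w})}
\]
is well defined, the numerator being positive by Lemma~\ref{BellisLemma}. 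Since $\rho_0,\rho_1$ are constants and the coercivity hypothesis of Lemma~\ref{BellisLemma} forces $\rho_1>\rho_0$ (hence $(\rho_1-\rho_0)^{-1}>0$), the family $\tau\mapsto\mathcal{A}_\tau$ is a self-adjoint polynomial family in $\tau$; by Kato's analytic perturbation theory its eigenbranches are analytic, and $\gamma(\tau)$, their lower envelope, is differentiable wherever the smallest eigenvalue is simple and possesses one-sided derivatives otherwise.

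Fix $\tau$ and let ${\boldsymbol w}={\boldsymbol w}(\tau)\in V$ be an associated eigenfunction normalized by $\mathcal{B}({\boldsymbol w},{\boldsymbol w})=1$, so $\gamma(\tau)=\mathcal{A}_\tau({\boldsymbol w},{\boldsymbol w})$. Differentiating the identity $\mathcal{A}_\tau({\boldsymbol w},{\boldsymbol \varphi})=\gamma(\tau)\mathcal{B}({\boldsymbol w},{\boldsymbol \varphi})$ in $\tau$, testing with ${\boldsymbol \varphi}={\boldsymbol w}$, and using the symmetry of $\mathcal{A}_\tau$ and $\mathcal{B}$ together with the eigenvalue equation to cancel the terms carrying the $\tau$-derivative of ${\boldsymbol w}$, I obtain $\gamma'(\tau)=\tfrac{\partial\mathcal{A}_\tau}{\partial\tau}({\boldsymbol w},{\boldsymbol w})$. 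The crucial computation is this explicit derivative. Expanding $\mathcal{A}_\tau$ and integrating by parts on the cross term, the two boundary conditions defining $V$ (namely ${\boldsymbol w}={\boldsymbol 0}$ and $\sigma({\boldsymbol w}){\boldsymbol \nu}={\boldsymbol 0}$ on $\Gamma$) make the surface integral vanish, giving $(\nabla\cdot\sigma({\boldsymbol w}),{\boldsymbol w})=-\mathcal{B}({\boldsymbol w},{\boldsymbol w})=-1$. Substituting this leads to
\[
\gamma'(\tau)=-\frac{2\rho_0}{\rho_1-\rho_0}+\frac{2\tau\rho_0\rho_1}{\rho_1-\rho_0}\,\|{\boldsymbol w}\|^2,
\qquad
f'(\tau)=-\frac{\rho_0+\rho_1}{\rho_1-\rho_0}+\frac{2\tau\rho_0\rho_1}{\rho_1-\rho_0}\,\|{\boldsymbol w}\|^2,
\]
where $\|\cdot\|$ is the $L^2(D)^2$ norm.

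It remains to bound $\|{\boldsymbol w}\|^2$. The variational characterization of the smallest elasticity eigenvalue \eqref{EEweak} gives $\mathcal{B}({\boldsymbol u},{\boldsymbol u})\ge\delta_1\|{\boldsymbol u}\|^2$ for all ${\boldsymbol u}\in H_0^1(D)^2\supset V$; with the normalization $\mathcal{B}({\boldsymbol w},{\boldsymbol w})=1$ this yields $\|{\boldsymbol w}\|^2\le\delta_1^{-1}$. Since the coefficient $2\tau\rho_0\rho_1/(\rho_1-\rho_0)$ is positive for $\tau>0$, inserting this bound gives
\[
f'(\tau)\le-\frac{\rho_0+\rho_1}{\rho_1-\rho_0}+\frac{2\tau\rho_0\rho_1}{\delta_1(\rho_1-\rho_0)},
\]
whose right-hand side is strictly negative exactly when $\tau<\tfrac{\delta_1(\rho_0+\rho_1)}{2\rho_0\rho_1}$, the asserted interval.

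The main obstacle is the rigorous differentiability claim, i.e.\ the passage from the min-characterization to the Feynman--Hellmann formula: when the smallest generalized eigenvalue is simple this is standard Kato theory, and at possible eigenvalue crossings one only controls one-sided derivatives. This causes no difficulty for the monotonicity conclusion, however, because the bound $\|{\boldsymbol w}\|^2\le\delta_1^{-1}$ holds for every eigenfunction in the lowest eigenspace, so the same strict inequality bounds both one-sided derivatives; together with the continuity of $f$ this shows $f$ is strictly decreasing on the whole interval. The integration-by-parts identity and the algebraic simplification producing precisely the threshold $\delta_1(\rho_0+\rho_1)/(2\rho_0\rho_1)$ are then routine.
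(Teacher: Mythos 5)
Your proposal is correct and follows essentially the same route as the paper: both reduce to the Feynman--Hellmann-type identity $f'(\tau)=\frac{2\tau\rho_0\rho_1}{\rho_1-\rho_0}\|{\boldsymbol w}\|^2-\frac{\rho_0+\rho_1}{\rho_1-\rho_0}$ for a $\mathcal{B}$-normalized minimizer of the Rayleigh quotient and then close with the bound $\|{\boldsymbol w}\|^2\le\delta_1^{-1}$ coming from the smallest elasticity eigenvalue. The only difference is how differentiability is justified: you invoke Kato's analytic perturbation theory and control possible crossings through one-sided derivatives, whereas the paper proves the derivative of the infimum directly by sandwiching the difference quotient of $s(\kappa)=\inf_{a({\boldsymbol w},{\boldsymbol w})=1}\{\frac{1}{\rho_1-\rho_0}\|\nabla\cdot\sigma({\boldsymbol w})\|^2+\kappa\|{\boldsymbol w}\|^2\}$ between $\|{\boldsymbol w}_{\kappa+h}\|^2$ and $\|{\boldsymbol w}_{\kappa}\|^2$ and identifying the limit by a compactness argument.
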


\begin{proof}
Let $\gamma_1(\tau, \rho_0, \rho_1)$ be the first generalized eigenvalue of
\eqref{Linear}. The following Rayleigh quotient holds
\begin{eqnarray*}
\gamma_1(\tau, \rho_0, \rho_1) &=& \inf_{{\boldsymbol w} \in V} \frac{\mathcal{A}_\tau({\boldsymbol w}, {\boldsymbol w})}{\mathcal{B}({\boldsymbol w}, {\boldsymbol w}) } \\
&=& \inf_{{\boldsymbol w} \in V} \frac{  \left(\frac{1}{\rho_1-\rho_0} \left(\nabla \cdot {\sigma} + \tau \rho_0 \right) {\boldsymbol w},
 \left(\nabla \cdot {\sigma} + \tau \rho_0 \right){\boldsymbol w}\right) + \tau^2 (\rho_0 {\boldsymbol w}, {\boldsymbol w})}
 {({\sigma} ({\boldsymbol w}), \nabla {\boldsymbol w})} \\
 &=& \inf_{{\boldsymbol w} \in V} \frac{  \left(\frac{1}{\rho_1-\rho_0} \nabla \cdot {\sigma}({\boldsymbol w}), \nabla \cdot {\sigma}({\boldsymbol w})\right)
 	+ 2\tau \left(\frac{\rho_0}{\rho_1-\rho_0}  {\boldsymbol w}, \nabla \cdot {\sigma}({\boldsymbol w})\right)
+ \tau^2 \left(\frac{\rho_0 \rho_1}{\rho_1-\rho_0} {\boldsymbol w}, {\boldsymbol w} \right)}
 {({\sigma} ({\boldsymbol w}), \nabla {\boldsymbol w})}.
\end{eqnarray*}
When $\rho_0$ and $\rho_1$ are constants, we have 
\[
\gamma_1(\tau, \rho_0, \rho_1) =\inf_{{\boldsymbol w} \in V} \frac{  \frac{1}{\rho_1-\rho_0} \left(\nabla \cdot {\sigma}({\boldsymbol w}), \nabla \cdot {\sigma}({\boldsymbol w})\right)
+ \tau^2 \frac{\rho_0 \rho_1}{\rho_1-\rho_0} \left({\boldsymbol w}, {\boldsymbol w} \right)}
 {({\sigma} ({\boldsymbol w}), \nabla {\boldsymbol w})} - \frac{2\tau \rho_0}{\rho_1-\rho_0}.
\]

Note that the sesquilinear form 
\[
a({\boldsymbol u}, {\boldsymbol v}):=({\sigma} ({\boldsymbol u}), \nabla
{\boldsymbol v}) = 2\mu \varepsilon({\boldsymbol u}):\varepsilon({\boldsymbol
v}) +\lambda (\nabla\cdot{\boldsymbol u}) (\nabla\cdot{\boldsymbol v})
\]
is bounded, symmetric, and coercive. Hence
\[
\gamma_1(\tau, \rho_0, \rho_1) =\inf_{{\boldsymbol w} \in V, a({\boldsymbol w}, {\boldsymbol w})=1} \left\{\frac{1}{\rho_1-\rho_0} \left(\nabla \cdot {\sigma}({\boldsymbol w}), \nabla \cdot {\sigma}({\boldsymbol w})\right)
+ \tau^2 \frac{\rho_0 \rho_1}{\rho_1-\rho_0} \left({\boldsymbol w}, {\boldsymbol w} \right) \right\}
 - \frac{2\tau \rho_0}{\rho_1-\rho_0}.
\]
Let $\kappa :=\tau^2 \frac{\rho_0 \rho_1}{\rho_1-\rho_0}$. We define a new function
\[
s(\kappa) = \inf_{{\boldsymbol w} \in V, a({\boldsymbol w}, {\boldsymbol w})=1} \left\{\frac{1}{\rho_1-\rho_0} \|\nabla \cdot {\sigma}({\boldsymbol w})\|^2
+\kappa\|{\boldsymbol w}\|^2 \right\}.
\]
For a fixed $\kappa \in (0, \infty)$, there exists a ${\boldsymbol w}_\kappa$ such that ${\boldsymbol w_\kappa} \in V, a({\boldsymbol w}_\kappa, {\boldsymbol w}_\kappa)=1$, and
\[
s(\kappa) =  \left\{\frac{1}{\rho_1-\rho_0} \|\nabla \cdot {\sigma}({\boldsymbol w}_\kappa)\|^2
+\kappa\|{\boldsymbol w}_\kappa\|^2 \right\}.
\]
For a small enough positive $h$,
\begin{eqnarray*}
s(\kappa+h)-s(\kappa) &\le& \left\{\frac{1}{\rho_1-\rho_0} \|\nabla \cdot {\sigma}({\boldsymbol w}_\kappa)\|^2
+(\kappa+h)\|{\boldsymbol w}_\kappa\|^2 \right\} - \left\{\frac{1}{\rho_1-\rho_0} \|\nabla \cdot {\sigma}({\boldsymbol w}_\kappa)\|^2+\kappa\|{\boldsymbol w}_\kappa\|^2 \right\} \\
&=& h \|{\boldsymbol w}_\kappa\|^2.
\end{eqnarray*}
On the other hand, we have 
\begin{eqnarray*}
s(\kappa+h)-s(\kappa) &\ge& \left\{\frac{1}{\rho_1-\rho_0} \|\nabla \cdot {\sigma}({\boldsymbol w}_{\kappa+h})\|^2
+(\kappa+h)\|{\boldsymbol w}_{\kappa+h}\|^2 \right\} \\
&& \qquad \qquad - \left\{\frac{1}{\rho_1-\rho_0} \|\nabla \cdot {\sigma}({\boldsymbol w}_{\kappa+h})\|^2+
\kappa\|{\boldsymbol w}_{\kappa+h}\|^2 \right\} \\
&=& h \|{\boldsymbol w}_{\kappa+h}\|^2.
\end{eqnarray*}
Consequently,
\[
\|{\boldsymbol w}_{\kappa+h}\|^2 \le \frac{s(\kappa+h)-s(\kappa)}{h} \le \|{\boldsymbol w}_\kappa\|^2.
\]

The above inequality implies that  $\|{\boldsymbol w}_\kappa\|^2$ is monotonically decreasing and thus bounded.
Note that $a({\boldsymbol w}_\kappa, {\boldsymbol w}_\kappa)=1$. Then the continuity of $s$ and the compact
embedding of $V$ into $L^2(D)^2$ imply the existences of a
$\tilde{\boldsymbol w}$ such that
$\boldsymbol w_{\kappa+h}$ converges in $L^2(D)^2$ strongly and $\boldsymbol w_{\kappa+h}$ converges in $H^2(D)^2$ weakly.
In addition, ${\boldsymbol w}_{\kappa+h}$ satisfies
\[
\left( \frac{1}{\rho_1-\rho_0}\nabla \cdot {\sigma}({\boldsymbol w}_{\kappa+h}), \nabla \cdot {\sigma}({\boldsymbol \phi})\right)
+(\kappa+h)\left( {\boldsymbol w}_{\kappa+h},  {\boldsymbol \phi} \right) =  s(k+h) \left( {\sigma} ({\boldsymbol w}), \nabla {\boldsymbol \phi}\right),
\]
for all ${\boldsymbol \phi} \in V$. Taking $h \to 0$, we obtain
\[
\left( \frac{1}{\rho_1-\rho_0}\nabla \cdot {\sigma}(\tilde{\boldsymbol w}), \nabla \cdot {\sigma}({\boldsymbol \phi})\right)
+\kappa \left( \tilde{\boldsymbol w},  {\boldsymbol \phi} \right) =  s(k) \left( {\sigma} (\tilde{\boldsymbol w}), \nabla {\boldsymbol \phi}\right),
\]
for all ${\boldsymbol \phi} \in V$. Thus $\tilde{\boldsymbol w} = {\boldsymbol w}_\kappa$. Consequently
\[
\|{\boldsymbol w}_{\kappa+h}\|^2 \to \|{\boldsymbol w}_{\kappa}\|^2, \qquad h \to 0.
\]
Then the derivative of $s(\kappa)$ is $\|{\boldsymbol w}_{\kappa}\|^2$.

Combing the above estimates, we obtain 
\begin{eqnarray*}
\frac{\partial f(\tau)} {\partial \tau} &=& 2 \tau \frac{\rho_0 \rho_1}{\rho_1 - \rho_0}\|{\boldsymbol w}_{\kappa}\|^2 - \frac{2\rho_0}{\rho_1-\rho_0}-1\\
&=&  2 \tau \frac{\rho_0 \rho_1}{\rho_1 - \rho_0}\|{\boldsymbol w}_{\kappa}\|^2 - \frac{\rho_1+\rho_0}{\rho_1-\rho_0}.
\end{eqnarray*}
Let $\delta_1$ be the smallest elasticity eigenvalue. One has that
\[
\|{\boldsymbol w}_\kappa\|^2 \le \frac{1}{\delta_1} ({\sigma} ({\boldsymbol w}_\kappa), \nabla {\boldsymbol w}_\kappa) = \frac{1}{\delta_1},
\]
since $ (\sigma({\boldsymbol w}_\kappa), \nabla {\boldsymbol w}_\kappa) = 1$.
This implies that
\begin{equation}\label{pfptau}
\frac{\partial f(\tau)} {\partial \tau} \le  \frac{ 2 \tau \rho_0 \rho_1}{\delta_1(\rho_1 - \rho_0)} - \frac{\rho_1+\rho_0}{\rho_1-\rho_0}.
\end{equation}
In particular, $f$ is decreasing, i.e.,
\[
\frac{\partial f(\tau)} {\partial \tau} \le 0 \qquad \text{if } \qquad \tau <
\frac{\delta_1(\rho_0+\rho_1)}{2\rho_0 \rho_1}.
\]
It is easy to see that $f(\tau)>0$ if $\tau \to 0$ and $f(\tau) < 0$ if $\tau \to \infty$.
\end{proof}

Since we only have the finite element approximation for the values for $f$, the nonlinear
equation which we solve is in fact a discrete version of \eqref{functionf}
\begin{equation}\label{functionfh}
f_h(\tau):=\gamma_h(\tau) - \tau.
\end{equation}
Let $\epsilon > 0$. From \eqref{pfptau}, there exists $\eta>0$ such that  
\begin{equation}\label{pftau}
\frac{\partial f(\tau)} {\partial \tau} \le - \eta \qquad \text{for } \tau \in \left [0, \frac{\delta_1(\rho_0+\rho_1)}{2\rho_0 \rho_1} - \epsilon \right].
\end{equation}
\begin{theorem}\label{convergenceA}
Assume that we apply the conforming Argyris finite element method for \eqref{Linear} on a regular mesh $\mathcal T$ with mesh size $h$.
Let $\tau_0$ be the exact root of \eqref{functionf} and $\tau_{0,h}$ be the root of \eqref{functionfh} such that 
$\tau_0, \tau_{o,h} \in \left [0, \frac{\delta_1(\rho_0+\rho_1)}{2\rho_0 \rho_1} - \epsilon \right]$.
Then there exists $h_0$ such that for $h < h_0$ 
\begin{equation}\label{gamma10}
|\tau_{0,h}-\tau_0| \le Ch^{2\alpha}/\eta.
\end{equation}
\end{theorem}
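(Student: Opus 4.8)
The plan is to convert the pointwise eigenvalue error bound of Theorem~\ref{convergeorder} into a bound on the distance between the roots, exploiting that $f$ has a derivative bounded away from zero on the interval in question. Write $I := \left[0, \frac{\delta_1(\rho_0+\rho_1)}{2\rho_0 \rho_1} - \epsilon \right]$. The two structural inputs are: by \eqref{pftau}, $f$ is differentiable with $f'(\tau) \le -\eta < 0$ for $\tau \in I$; and, taking $\gamma_h(\tau)$ to be the smallest discrete eigenvalue approximating $\gamma(\tau)$, Theorem~\ref{convergeorder} controls $f_h(\tau) - f(\tau) = \gamma_h(\tau) - \gamma(\tau)$ by $Ch^{2\alpha}$.

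First I would upgrade the eigenvalue estimate to one that is uniform in $\tau$ over the compact interval $I$. For each fixed $\tau$, Theorem~\ref{convergeorder} gives $|\gamma(\tau) - \gamma_h(\tau)| \le C(\tau) h^{2\alpha}$, but the constant a priori depends on $\tau$ through the boundedness and coercivity constants of $\mathcal{A}_\tau$, and hence through the operator-norm bound $\|T - T_h\| \le C(\tau) h^{2\alpha}$ (with $T$, $T_h$ depending on $\tau$). Since $\mathcal{A}_\tau$ depends polynomially on $\tau$ and the coercivity constant of Lemma~\ref{BellisLemma} can be chosen continuously in $\tau$, the map $\tau \mapsto C(\tau)$ is bounded on the compact set $I$. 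Setting $C := \sup_{\tau \in I} C(\tau) < \infty$ yields $\sup_{\tau \in I}|\gamma(\tau) - \gamma_h(\tau)| \le C h^{2\alpha}$.

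Next I would combine this with the strict monotonicity of $f$. Applying the mean value theorem to the differentiable function $f$ on the segment with endpoints $\tau_0$ and $\tau_{0,h}$ (contained in $I$), there is a $\xi$ between them with
\[
|f(\tau_{0,h}) - f(\tau_0)| = |f'(\xi)|\,|\tau_{0,h} - \tau_0| \ge \eta\,|\tau_{0,h} - \tau_0|.
\]
Because $f(\tau_0) = 0$ and $f_h(\tau_{0,h}) = 0$, the uniform estimate gives
\[
|f(\tau_{0,h})| = |f(\tau_{0,h}) - f_h(\tau_{0,h})| = |\gamma(\tau_{0,h}) - \gamma_h(\tau_{0,h})| \le C h^{2\alpha}.
\]
Combining the two displays yields $\eta\,|\tau_{0,h} - \tau_0| \le C h^{2\alpha}$, which is precisely \eqref{gamma10}. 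The threshold $h_0$ enters only to guarantee that, for $h < h_0$, the discrete root $\tau_{0,h}$ actually lies in $I$, so that both the uniform bound and the derivative bound $f' \le -\eta$ apply along the whole segment; this follows from the uniform convergence $f_h \to f$ on $I$ together with $f' \le -\eta$, which pins the root of $f_h$ within $O(h^{2\alpha})$ of $\tau_0$.

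The main obstacle is this uniformity step: the pointwise Babu\v{s}ka--Osborn estimate must be promoted to a single constant $C$ valid for all $\tau \in I$, which requires verifying that the coercivity constant $\alpha$ of Lemma~\ref{BellisLemma}, the boundedness constant of $\mathcal{A}_\tau$, and hence $\|T - T_h\|$, can be chosen uniformly over the compact $\tau$-interval. Once this is secured, the mean value theorem argument is entirely routine.
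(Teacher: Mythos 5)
Your proposal is correct and follows essentially the same route as the paper: both arguments combine the eigenvalue error estimate of Theorem~\ref{convergeorder} with the derivative bound \eqref{pftau} to convert the eigenvalue error into a root error divided by $\eta$. In fact your rendering is the more careful one — the paper's own proof writes $|\tau_{0,h}-\tau_0|=|\gamma-\gamma_h|<Ch^{2\alpha}$ even though $\gamma$ and $\gamma_h$ are evaluated at different arguments, and your mean-value-theorem step together with the evaluation of $f-f_h$ at the single point $\tau_{0,h}$ (plus the observation that the constant must be taken uniform over the compact interval) is exactly what is needed to make that step rigorous.
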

\begin{proof}
The assumption implies that $\gamma(\tau_0)-\tau_0$ and $\gamma_h(\tau_{0,h}) = \tau_{0,h}$, i.e.,
\[
\gamma = \tau_0 \quad \text{and} \quad \gamma_h = \tau_{0,h}.
\]
By Theorem \ref{convergeorder}, there exist $h_0$ such that for a regular mesh with $h < h_0$, we have
\[
|\tau_{0,h}-\tau_0| = |\gamma - \gamma_h| < C h^{2\alpha}.
\]
Then direct application of \eqref{pftau} leads to \eqref{gamma10}.
\end{proof}

Note that $f_h(\tau)$ is a nonlinear function. It is natural to use some iterative methods to compute the roots of $f_h$. We choose to
use the secant method which avoids using the derivatives of $f_h(\tau)$, 

Given a regular triangular mesh $\mathcal{T}$ for $D$, let $x_0$ and $x_1$ be
two positive numbers close to $0$ such that $0 < x_0 < x_1$. Let $N$ be the
number of smallest real transmission eigenvalues one wants to compute. Let
$tol$ and $maxit$ be the preset precision and the maximum number
of iteration of the secant method, respectively. 
The following algorithm uses a secant iteration to compute $N$ smallest positive transmission eigenvalues.

\vskip 0.2cm
{\bf $\text{SMETE}$}
\begin{itemize}
\item[] construct matrix $B_{h}$ corresponding to $\mathcal{B}$ in \eqref{Linear}
\item[] for $i=1 : N$ 
\item[] $\quad$ $it=0$
\item[] $\quad$ $\delta=\text{abs}(x_1-x_0)$
\item[] $\quad$ $\tau= x_0$ and construct the matrix $A_{\tau,h}$ 
\item[] $\quad$ $\mbox{compute the $i$th generalized eigenvalue $\gamma_0$ of $A_{\tau,h}{\bf x}=\gamma B_h{\bf x}$}$
\item[] $\quad$ $\tau = x_1$ and construct matrix $A_{\tau,h}$
\item[] $\quad$ $\mbox{compute the $i$th generalized eigenvalue $\gamma_1$ of $A_{\tau,h}{\bf x}=\gamma B_h{\bf x}$}$
\item[] $\quad$ $\mbox{while $\delta > tol$ and $it< maxit$}$
\item[] $\quad$ $\qquad \tau = x_1-\gamma_1 \frac{x_1-x_0}{\gamma_1-\gamma_0}$
\item[] $\quad$ $\qquad \mbox{construct the matrix $A_{\tau,h}$}$
\item[] $\quad$ $\qquad \mbox{compute the $i$th smallest eigenvalue $\gamma_\tau$ of $A_{\tau,h}{\bf x}=\gamma B_h{\bf x}$}$
\item[] $\quad$ $\qquad \delta = \text{abs}(\gamma_\tau-\tau)$
\item[] $\quad$ $\qquad x_0 = x_1, x_1=\tau$
\item[] $\quad$ $\qquad \gamma_0=\gamma_1, \gamma_1=\gamma_\tau$ 
\item[] $\quad$ $\qquad it = it+1$
\item[] $\quad$ $\mbox{end}$
\end{itemize}

\begin{rema}\label{remarkF}
Similar to the cases for acoustic and electromagnetic waves, the elasticity
transmission eigenvalue problem is nonlinear and non-self-adjoint. Although not
theoretically justified, numerical results indicate that there exist complex
elasticity transmission eigenvalues, as we will see shortly. The above method
can compute only real eigenvalues, which correspond to the frequencies of
elasticity waves. The physical meaning of complex eigenvalues is not yet clear.
\end{rema}

%


\section{A Mixed Finite Element Method}

The method proposed above needs to compute roots of nonlinear functions and many
generalized eigenvalue eigenvalue problems \eqref{Linear}. In addition, the
$H^2$-conforming Argyris element is used for discretization. In this section, we
give a much simpler mixed method for \eqref{Nonlinear}, which also computes
complex eigenvalues. It rewrites the fourth order problem into two second order
equations. Then the simpler $H^1$-conforming Lagrange element can be applied
directly. The purpose of this section is to provide an alternative method for
verification.

Recall that the fourth order formulation of ETE is to find $\tau$ and $ {\bf 0}\ne {\boldsymbol  w} \in V$ such that
\begin{equation}\label{quadratictau}
\left(\nabla \cdot  \sigma + \tau \rho_1
\right) (\rho_1-\rho_0)^{-1} \left(\nabla \cdot \sigma +
\tau \rho_0 \right) {\boldsymbol w} = {\bf 0}.
\end{equation}
Following \cite{JiSunTurner2012ACMTOM}, we introduce an
auxiliary variable ${\boldsymbol  v}$ such that
\[
{\boldsymbol  v}=(\rho_1-\rho_0)^{-1} \left(\nabla \cdot \sigma
+ \tau \rho_0 \right) {\boldsymbol w}.
\]
The special form of ${\boldsymbol  v}$ is chosen such that the quadratic eigenvalue problem \eqref{quadratictau}
can be written as a linear eigenvalue system. Using ${\boldsymbol w}$ and the auxiliary variable ${\boldsymbol  v}$, we obtain 
\begin{eqnarray*}
\left(\nabla \cdot \sigma + \tau \rho_0 \right) {\boldsymbol w}&=&(\rho_1-\rho_0){\boldsymbol  v},\\
\left(\nabla \cdot \sigma + \tau \rho_1 \right) {\boldsymbol v}&=&0,
\end{eqnarray*}
i.e.,
\begin{eqnarray*}
(\rho_1-\rho_0){\boldsymbol  v} - \nabla \cdot \sigma( {\boldsymbol w})&=& \tau \rho_0 {\boldsymbol w},\\
- \nabla \cdot \sigma( {\boldsymbol v})&=& \tau \rho_1 {\boldsymbol v}.
\end{eqnarray*}

The weak form is to find $(\tau, {\boldsymbol w}, {\boldsymbol v})\in
\mathbb{C}\times H_0^1(D)^2\times H^1(D)^2$ such that
\begin{eqnarray*}
((\rho_1-\rho_0){\boldsymbol  v} ,{\boldsymbol  \phi} )+(\sigma
({\boldsymbol w}),\nabla {\boldsymbol  \phi} )&=&\tau(\rho_0 {\boldsymbol w},
{\boldsymbol  \phi} ) \quad \text{for all } {\boldsymbol  \phi}\in H^1(D)^2,\\
( \sigma ({\boldsymbol v}),\nabla {\boldsymbol  \psi}
)&=&\tau(\rho_1 {\boldsymbol v}, {\boldsymbol  \psi} ) \quad \text{for all }{\boldsymbol
 \psi}\in H_0^1(D)^2.
\end{eqnarray*}
Let $S_h \subset H^1(D)$ be the Lagrange finite element space and $S_h^0 \subset S_h$ with zero trace. 
The discrete problem is to find $(\tau_h, {\boldsymbol w}_h, {\boldsymbol v}_h)
\in \mathbb C \times (S_h^0)^2 \times (S_h)^2$ such that
\begin{eqnarray*}
((\rho_1-\rho_0){\boldsymbol  v}_h ,{\boldsymbol  \phi}_h
)+(\sigma ({\boldsymbol w}_h),\nabla {\boldsymbol  \phi}_h
)&=&\tau(\rho_0 {\boldsymbol w}_h, {\boldsymbol  \phi}_h ) \quad \text{for all }
{\boldsymbol  \phi}_h\in (S_h)^2,\\
(\sigma ({\boldsymbol v}_h),\nabla {\boldsymbol  \psi}_h
)&=&\tau(\rho_1 {\boldsymbol v}_h, {\boldsymbol  \psi}_h ) \quad \text{for all }
{\boldsymbol  \psi}_h\in (S_h^0)^2.
\end{eqnarray*}

\begin{rema}
Theory for \eqref{quadratictau} is not yet available and we are not able to provide a convergence analysis of the mixed
finite element method. Nonetheless, the above two methods do seem to compute the same set of real transmission eigenvalues.
\end{rema}

\section{Numerical Examples}

In this section, we present some numerical results using three domains: 
a disk with radius $R = 1/2$, the unit square and an L-shaped domain given by $(0,1)\times (0,1) \setminus [1/2, 1]\times[1/2,1]$.
Four levels of uniformly refined triangular meshes are generated for numerical experiments. The mesh size of initial mesh is $h_1=0.1$
and $h_i=h_{i-1}/2, i=2,3,4$.
Note that further refinement would lead to very large matrix eigenvalue problems which take too long to solve.
All examples are done using Matlab 2016a on a MacBook Pro with 16G memory and 3.3GHz Intel Core i7 processor.

\subsection{Results for \eqref{Linear}}

We first check the convergence rate of the Argyris method for the fourth order generalized eigenvalue problem \eqref{Linear} with fixed $\tau=2$.
Other parameters are chosen as follows
\begin{equation}\label{parameters}
\mu = 1/16, \quad \lambda = 1/4, \quad \rho_0 = 1, \quad \rho_1 = 4.
\end{equation}
The relative error is defined as
\[
E_{i+1} = \frac{|\gamma_{i+1}-\gamma_i|}{|\gamma_i|},\quad i=1,2,3,
\]
where $\gamma_{i}$ is the eigenvalue computed using the mesh with size $h_i$.
Then the convergence order is simply
\begin{align}
\mbox{convergence order}=\log_2{\frac{E_{i+1}} {E_{i+2}}},\quad \ i=1, 2.
\end{align}

The first eigenvalues are shown in Table~\ref{table1} for three domains. The convergence order for the unit square $2$.
The lower convergence order for the L-shaped domain is expected since the domain is non-convex. The convergence order for the disk
seems to be pre-asymptotic.
\begin{table}
\begin{center}
\begin{tabular}{lllllllllll}
\hline
$h$&Unit square &order & L-shape &order& Circle &order \\
\hline
0.1&1.97544798  & & 4.254621  & &2.074928  & \\
0.05&1.97544304 & & 4.244708 & &2.062945  & \\
0.025&1.97544109 & 1.533635 &  4.237900 &0.924724 &2.057076 &  1.247059\\
0.0125&1.97544043 &1.979999  &  4.233673 & 1.384228&  2.054222
& 1.612053\\
\hline
\end{tabular}
\caption{The first generalized eigenvalue of \eqref{Linear} for three domains.}
\label{table1}
\end{center}
\end{table}

\subsection{Result for $f_h(\tau)$}

We plot function $f_h(\tau)$ for three domains using the meshes with $h_3\approx 0.025$. 
For parameters given in \eqref{parameters}, the first elasticity eigenvalue $\delta_1=3.679328$ for the disk, $\delta_1=3.251402$ for the unit square, 
$\delta_1=4.325472$ for the L-shaped domain, which are computed by linear finite elements. 
According to Theorem \ref{decreasingfunc}, $f(\tau)$ is a decreasing function on $\left(0,\frac{\delta_1(\rho_0+\rho_1)}{2\rho_0\rho_1}\right)$. 
Plugging the values for $\delta_1, \rho_0, \rho_1$, we have that $f(\tau)$ is decreasing on 
\begin{equation}\label{intv}
(0,2.299580),\quad(0,2.032126), \quad \text{and}\quad (0,2.703420)
\end{equation}
for the three domains, respectively. 
In Figure \ref{fh}, we plot $f_{h}(\tau)=\gamma_{h}(\tau)-\tau$ for several eigenvalues.
The plots show that $f_h$ is decreasing right to the origin. 
In fact, it can be seen that $f_{h}(\tau)$ is decreasing on much larger intervals than those in \eqref{intv} predicted by Theorem \ref{decreasingfunc}.

\begin{figure}
\centering
\subfigure{\includegraphics[width=0.48\textwidth,height=0.36\textwidth]{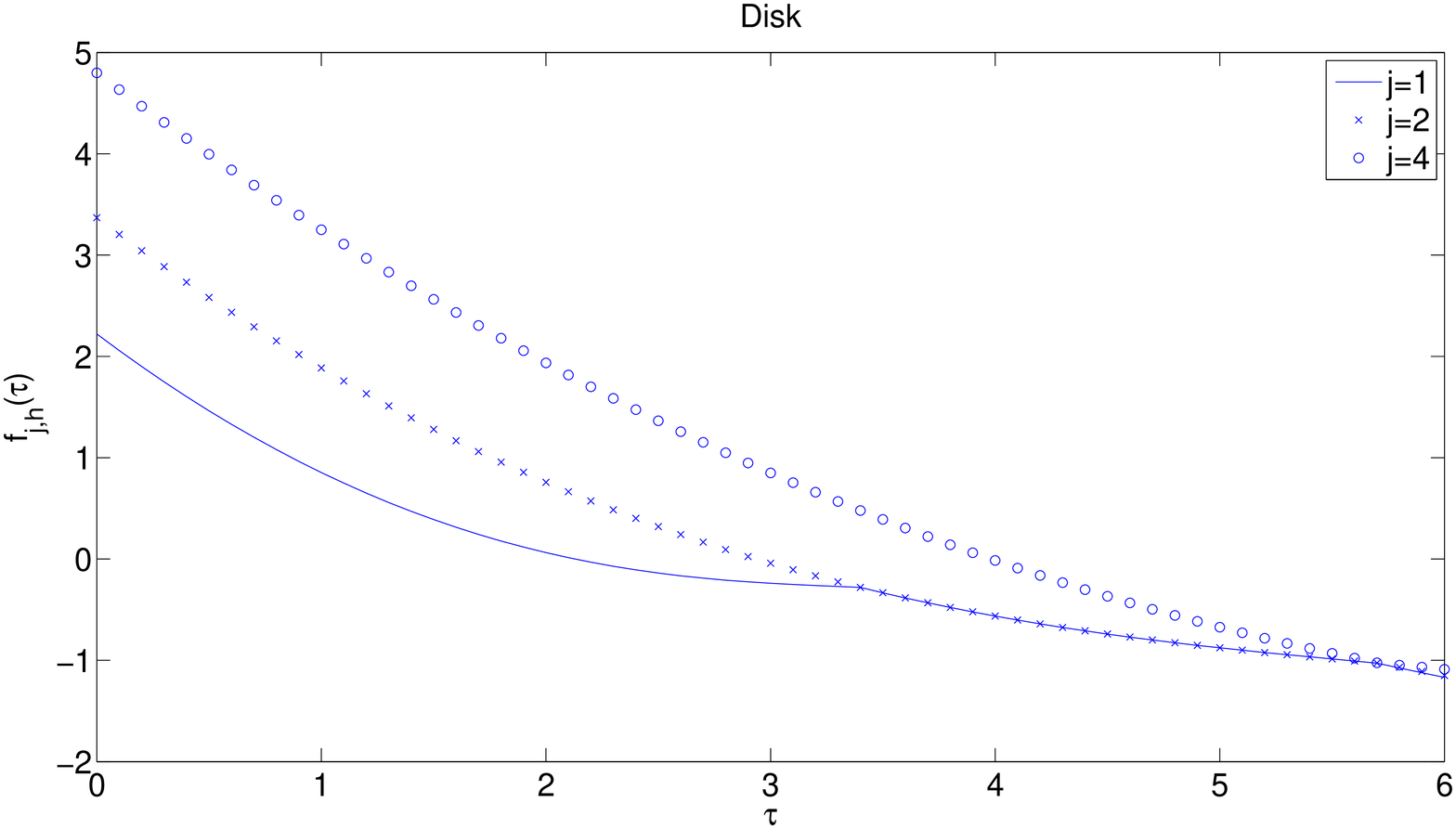}}
\subfigure{\includegraphics[width=0.48\textwidth,height=0.36\textwidth]{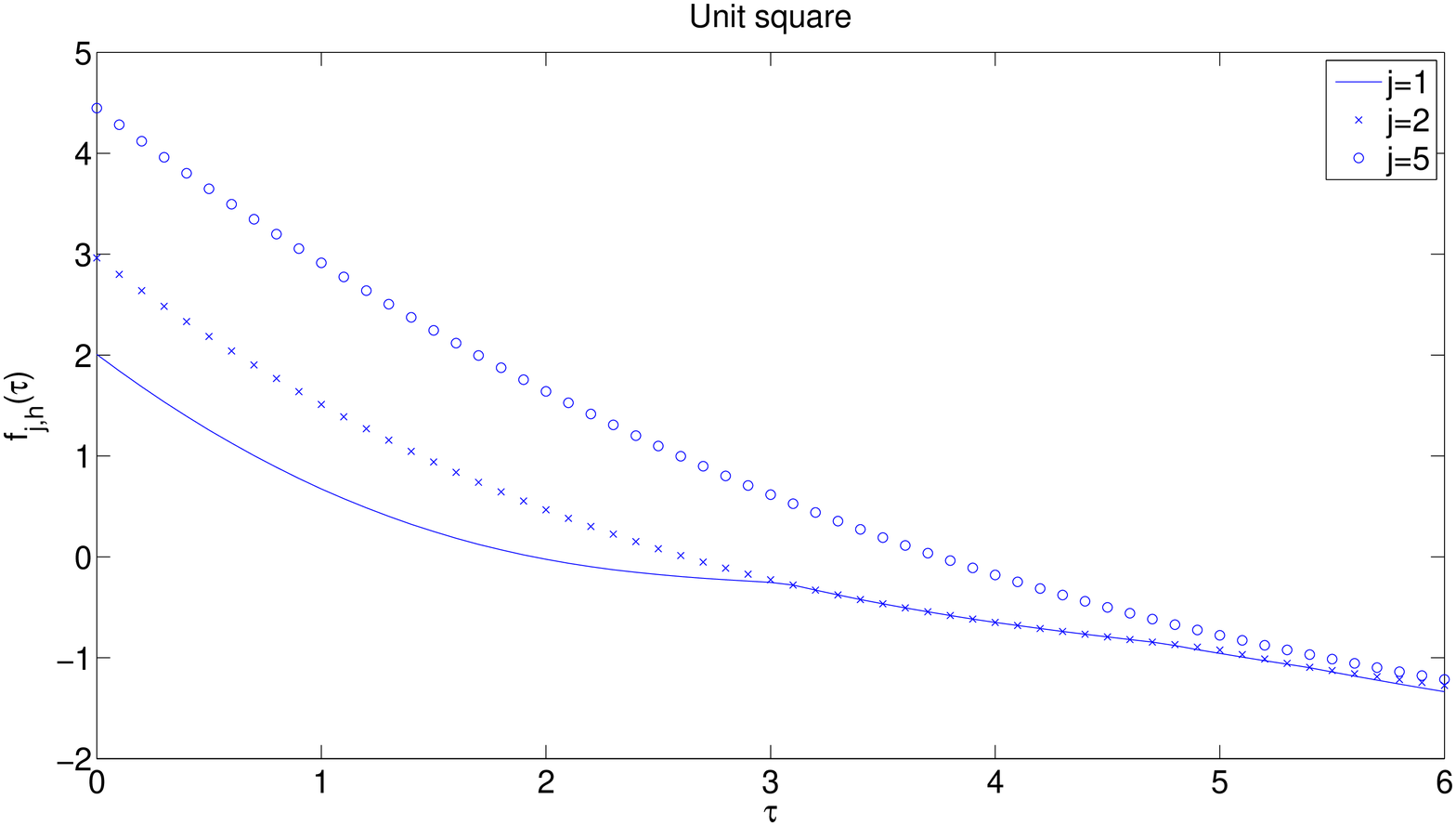}} \\
\subfigure{\includegraphics[width=0.48\textwidth,height=0.36\textwidth]{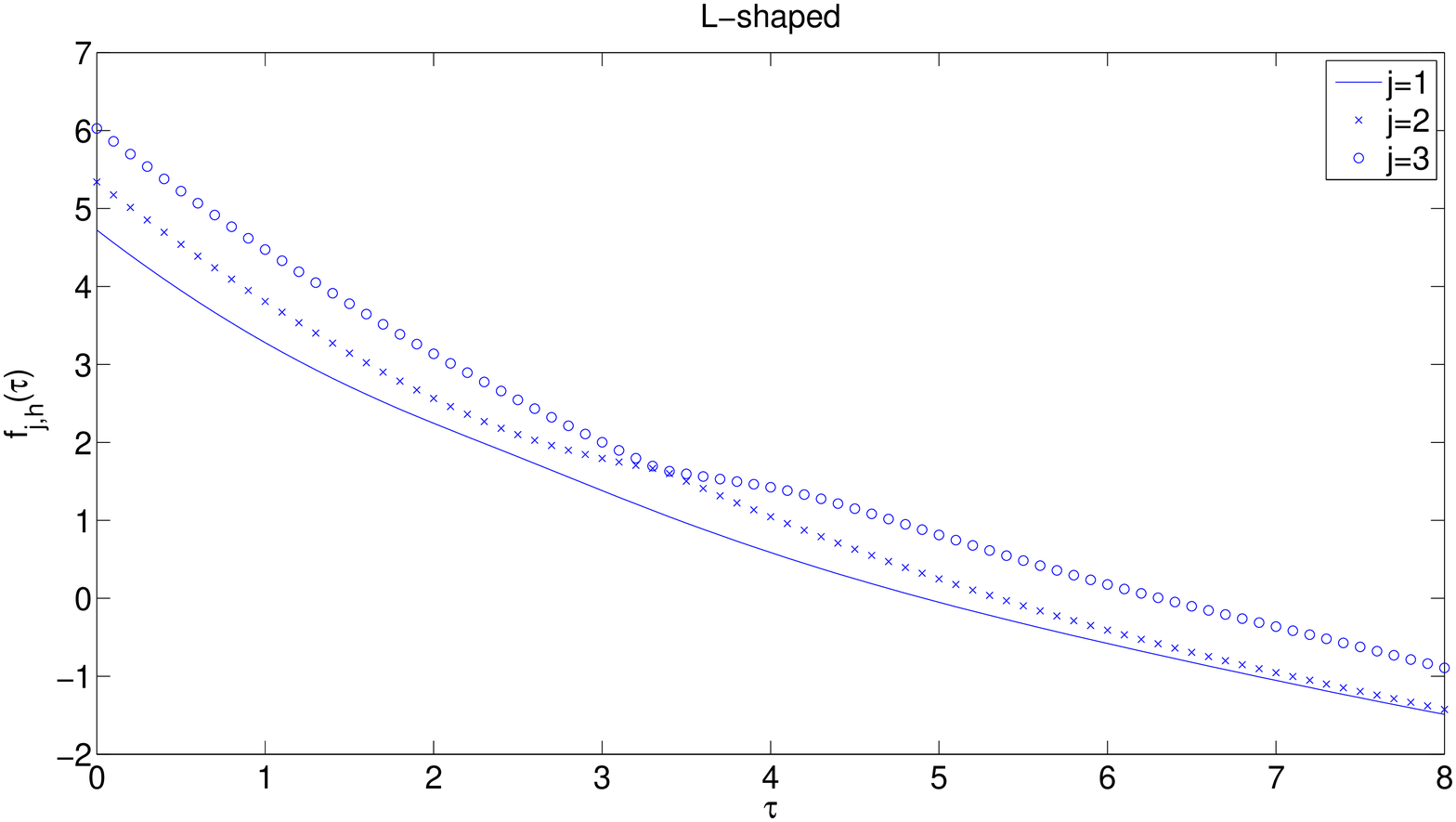}}
\caption{Plots of $f_{j,h}=\gamma_{j,h}(\tau)-\tau$ v.s. $\tau$. Top Left: the disk for $j=1,2, 4$. Top Right: the unit square for $j=1,2,5$. Bottom: the L-shaped domain for $j=1,2,3$.}
\label{fh}
\end{figure}

\subsection{Real transmission eigenvalues}

Next we present the results of the smallest transmission eigenvalues. 
Table~\ref{tableargyris} gives the computed eigenvalues and the convergence
orders of the first transmission eigenvalue of three domains using the secant
method. It can be seen that the convergence rate for the unite square is
approximately $2$ indicating that the associated eigenfunction $u \in H^3(D)$.
The convergence rate for the L-shaped domain is much lower, which is likely
caused by the low regularity of the eigenfunction. Similar results can be
observed for the biharmonic eigenvalue problem (see Chap. 4 of \cite{SunZhou2016}
or \cite{BrennerMonkSun2015LNCSE}).

\begin{table}[h!]
\begin{center}
\begin{tabular}{lllllllllll}
\hline
$h$&Unit square &order & L-shaped &order& Circle &order \\
\hline
0.1&1.94289620 & &  4.956235 & &2.151129 & \\
0.05&1.94288512& &  4.911192& &2.128551 & \\
0.025&1.94287991& 1.367706 &  4.887524&1.166121 &2.117252& 1.180424\\
0.0125& 1.94287811 &  1.961417& 4.874986& 1.529921&  2.110723 & 1.449210\\
\hline
\end{tabular}
\caption{Convergence orders of the first real transmission eigenvalue using the secant method.}
\label{tableargyris}
\end{center}
\end{table}
\begin{rema}
When the domain is a disk, one can obtain transmission eigenvalues associated
with radially-symmetric eigenfunctions using separation of variables. The detail
of derivation of such eigenvalues is given in Appendix~A.
\end{rema}

In Table~\ref{table2},
we show the six smallest transmission eigenvalues computed by the secant method for the three domains with mesh size $h \approx0.025$. 
The method converges in a few steps.
\begin{table}
\begin{center}
\begin{tabular}{lllllllllll}
\hline
$\tau_j$&Unit square  &NOI  & L-shaped &NOI& Circle &NOI \\
\hline
$\tau_1$&1.942885 &7& 4.887524 & 6&2.117252 & 7\\
$\tau_2$&2.618883 & 7&  5.314739& 5&2.921413 & 9\\
$\tau_3$&2.618883 &8&  6.301718&6 &2.921413& 12\\
$\tau_4$&3.247320 & 6&  6.664844&  6& 3.958629&8\\
$\tau_5$&3.748613 & 5&  7.418109&  7& 3.958629 &8\\
$\tau_6$&4.418714 & 5&  8.021953&  10& 5.175589 &6\\
\hline
\end{tabular}
\caption{Six smallest transmission eigenvalues computed by the secant method. "NOI" denotes the number of iterations.}
\label{table2}
\end{center}
\end{table}
In Table~\ref{table3},
we give the six smallest transmission eigenvalues computed by the secant method for the three domains using a different set of parameters
$\mu = 1/5, \lambda = 1/5, \rho_0 = 1/20, \rho_1 = 3$.  The iteration converges in a few steps as well.
\begin{table}
\begin{center}
\begin{tabular}{lllllllllll}
\hline
$\tau_j$&Unit square  &NOI  & L-shaped &NOI& Circle &NOI \\
\hline
$\tau_1$&6.451568 &4& 10.882301& 4&7.130154 & 4\\
$\tau_2$&7.649225 & 5&  13.680402& 4& 9.062169& 5\\
$\tau_3$& 7.649225 &5&  16.694660&4&9.063492& 8\\
$\tau_4$&11.201158 & 8&  18.281319&  5& 12.781042&6\\
$\tau_5$&11.404597 & 4&  19.550610&  4& 12.782461 &8\\
$\tau_6$&12.099399 & 6&  21.397458&  4& 14.216742 &4\\
\hline
\end{tabular}
\caption{Six smallest transmission eigenvalues with $\mu = 1/5, \lambda = 1/5, \rho_0 = 1/20, \rho_1 = 3$, "NOI" denotes the number of iterations.}
\label{table3}
\end{center}
\end{table}

\subsection{The special case of disk}

From Appendix A, a radially-symmetric transmission eigenvalue of the disk is the first root of $Z_0$ defined in \eqref{eqZ0}.
Using some root finding technique, we find that $\omega=3.554954$, i.e., $\tau=12.637700$. However, it is not the smallest transmission eigenvalue of the disk.
The secant method computes an approximate transmission eigenvalue $\tau=12.662693$ with $h=0.05$ and $\tau=12.624538$ with $h=0.025$. 
The mixed method also computes a transmission eigenvalue $\tau=12.713678$ with $h=0.025$.
Figure \ref{eigfunction} plots the eigenfunction ${\boldsymbol u}$ associated with this eigenvalue, which appear to be radially-symmetric.
\begin{figure}
\centering
\subfigure{\includegraphics[width=0.3\textwidth,height=0.25\textwidth]{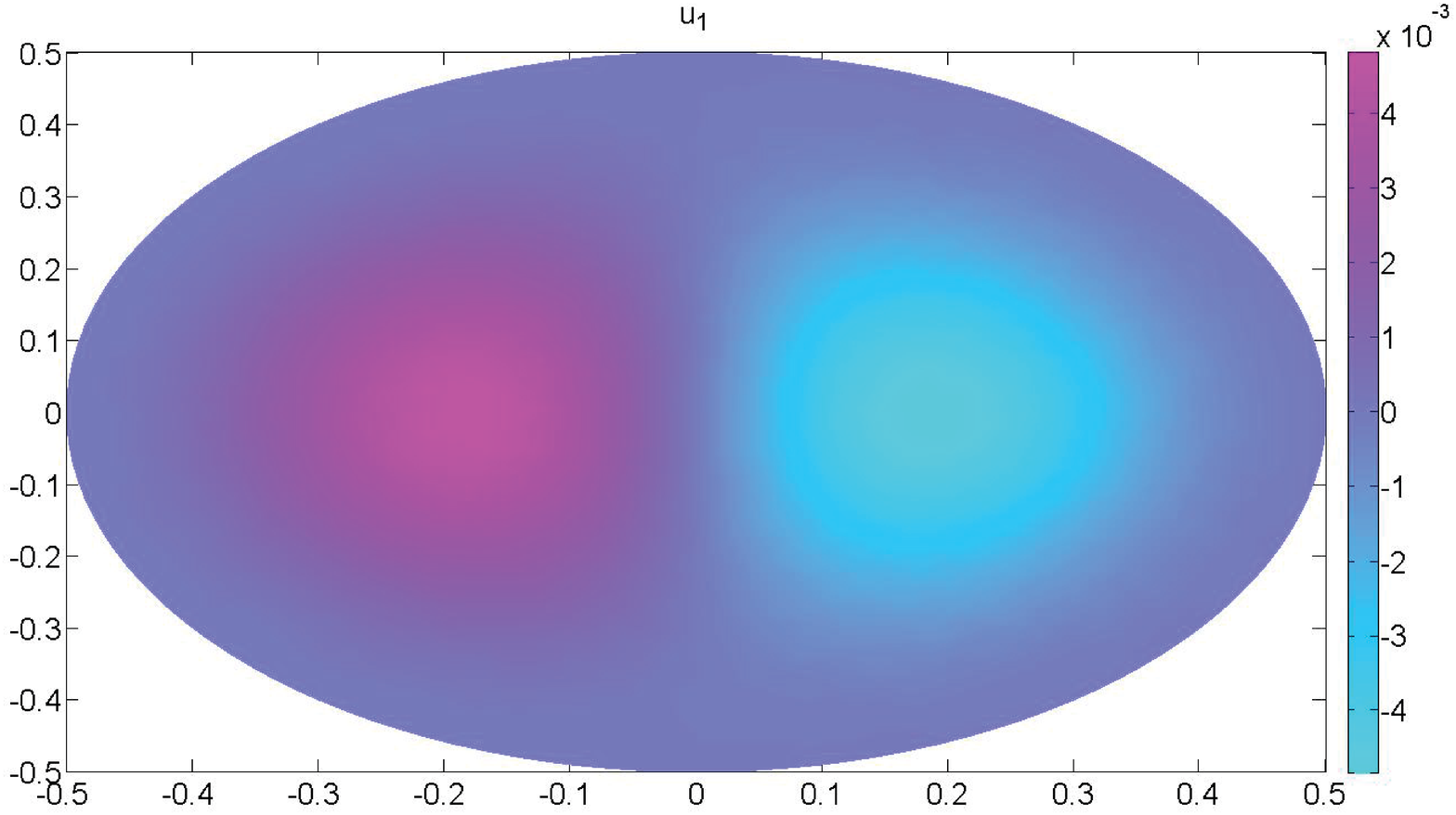}}
\subfigure{\includegraphics[width=0.3\textwidth,height=0.25\textwidth]{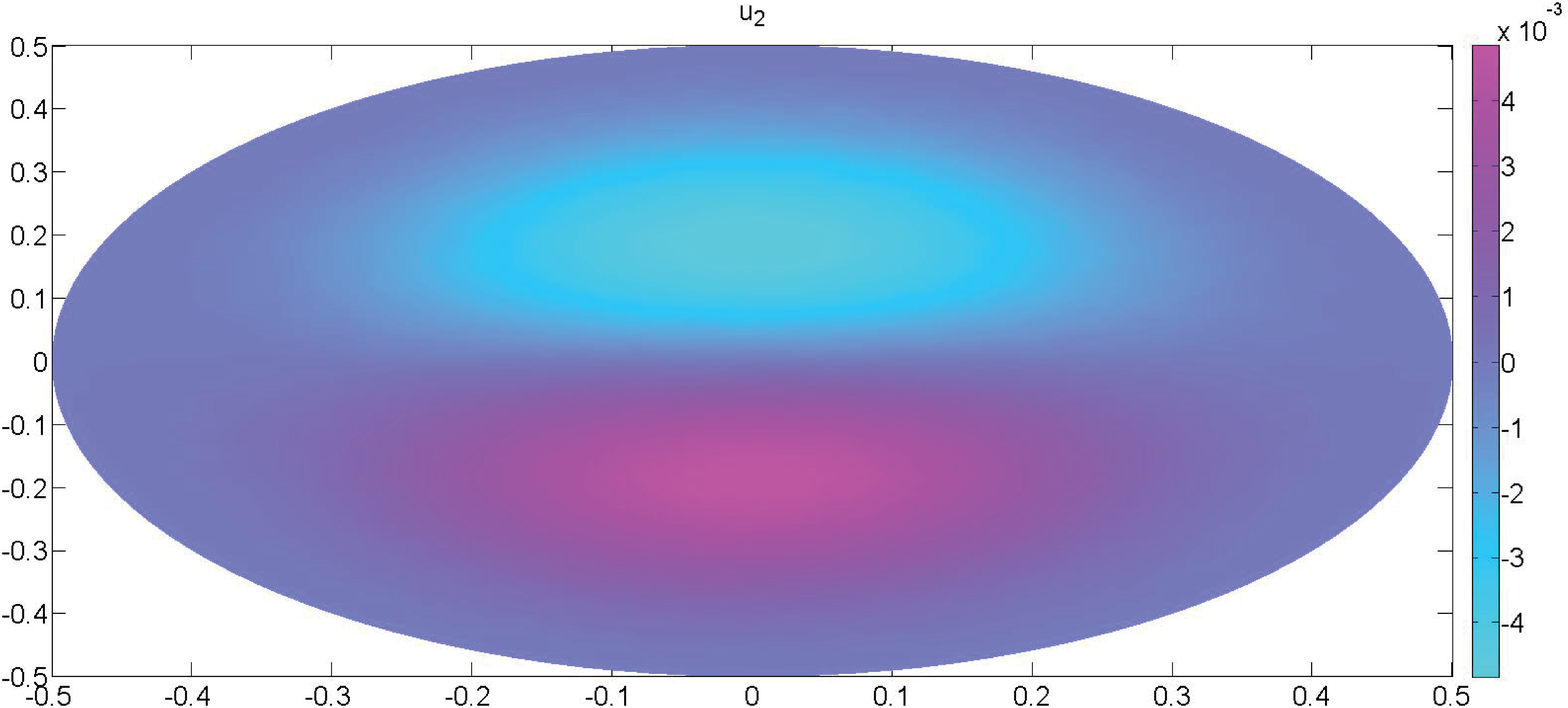}}
\subfigure{\includegraphics[width=0.3\textwidth,height=0.25\textwidth]{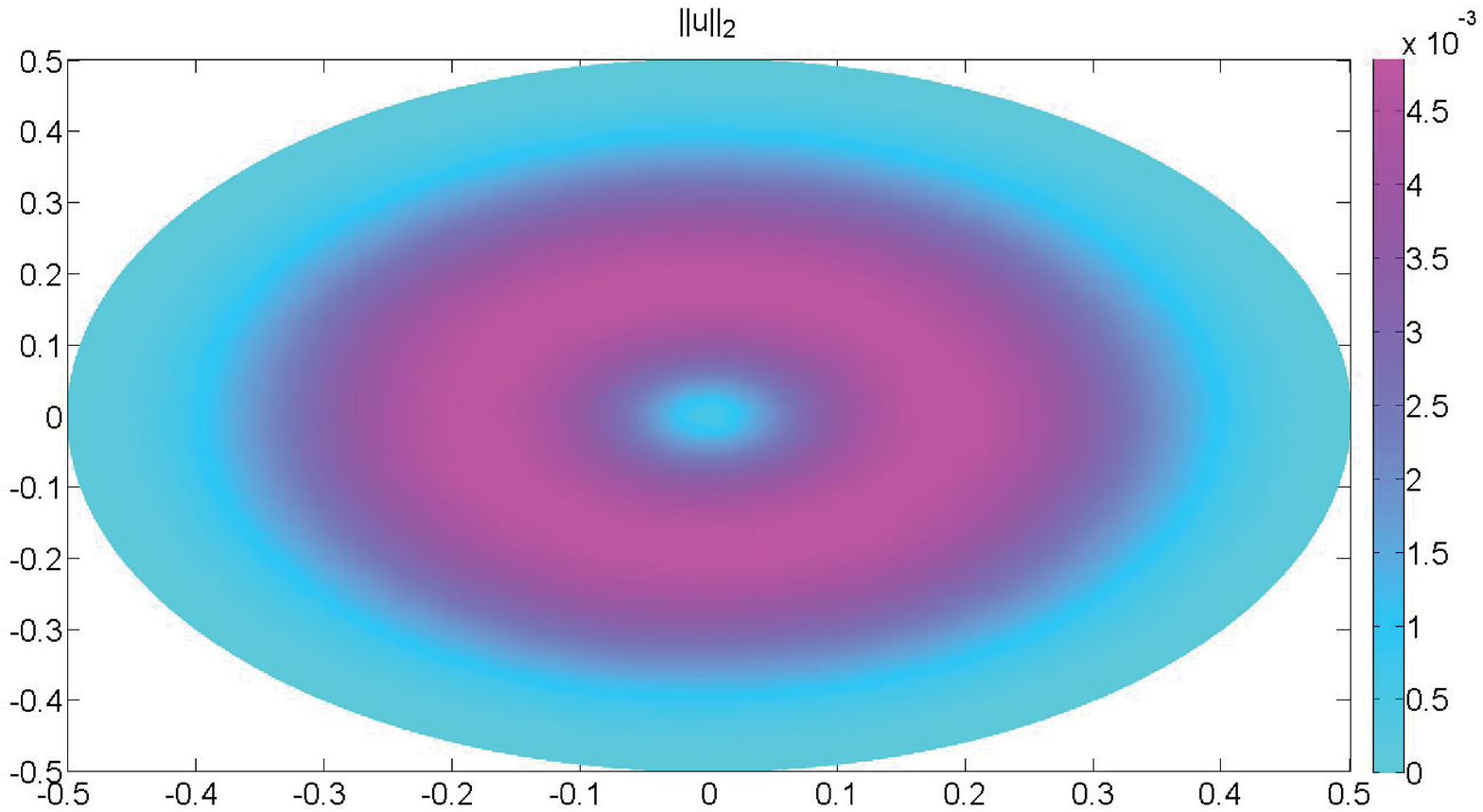}}
\caption{A radially-symmetric eigenfunction. Left: $u_1$. Middle: $u_2$. Right: $|{\boldsymbol u}=(u_1,u_2)|$.}
\label{eigfunction}
\end{figure}
Note that not all eigenfunctions are radially-symmetric. Figure \ref{secondeig} is the eigenfunction associated with the second eigenvalue. Clearly, it is not a radially-symmetric function.
\begin{figure}
\centering
\subfigure{\includegraphics[width=0.3\textwidth,height=0.25\textwidth]{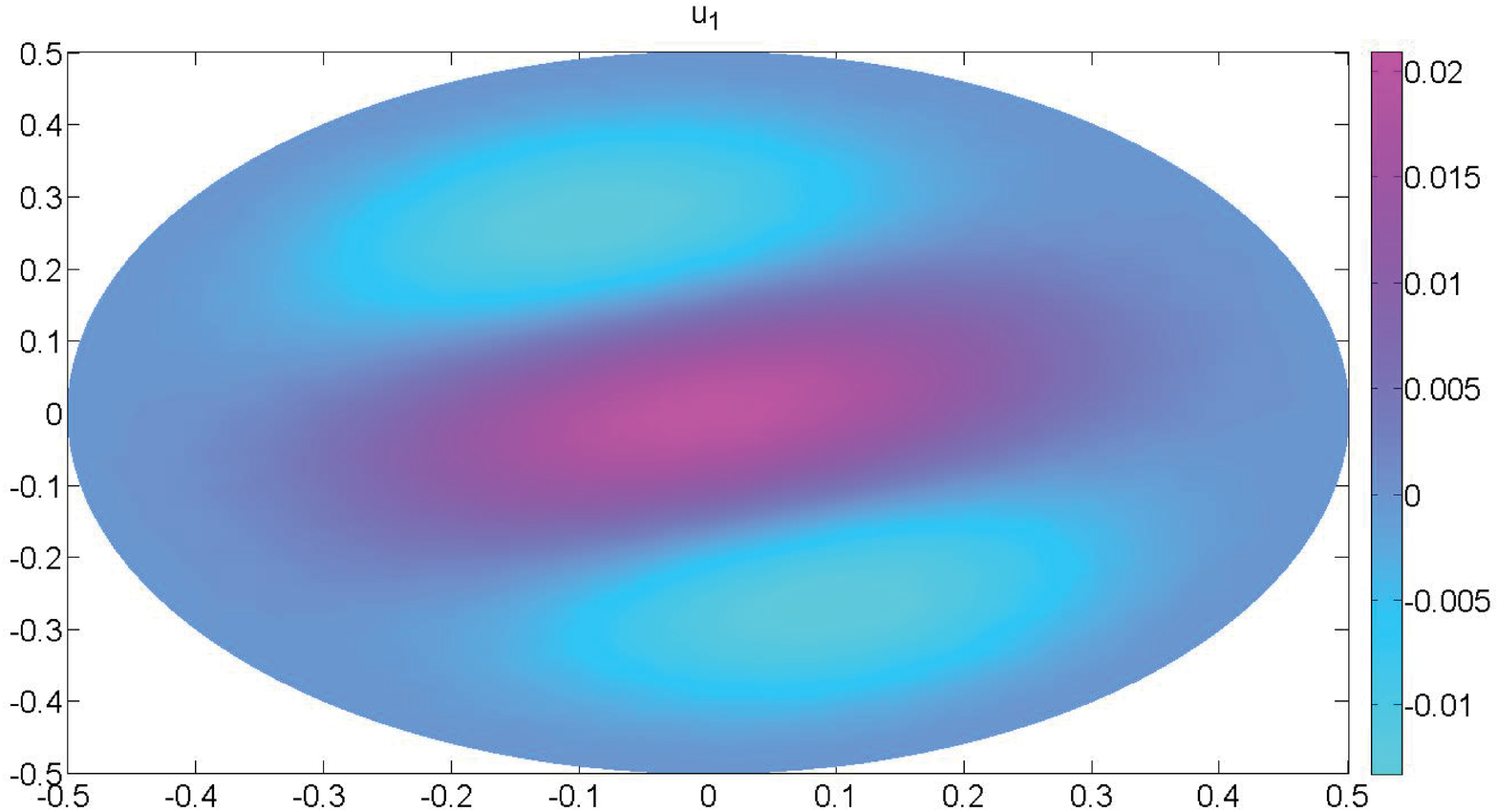}}
\subfigure{\includegraphics[width=0.3\textwidth,height=0.25\textwidth]{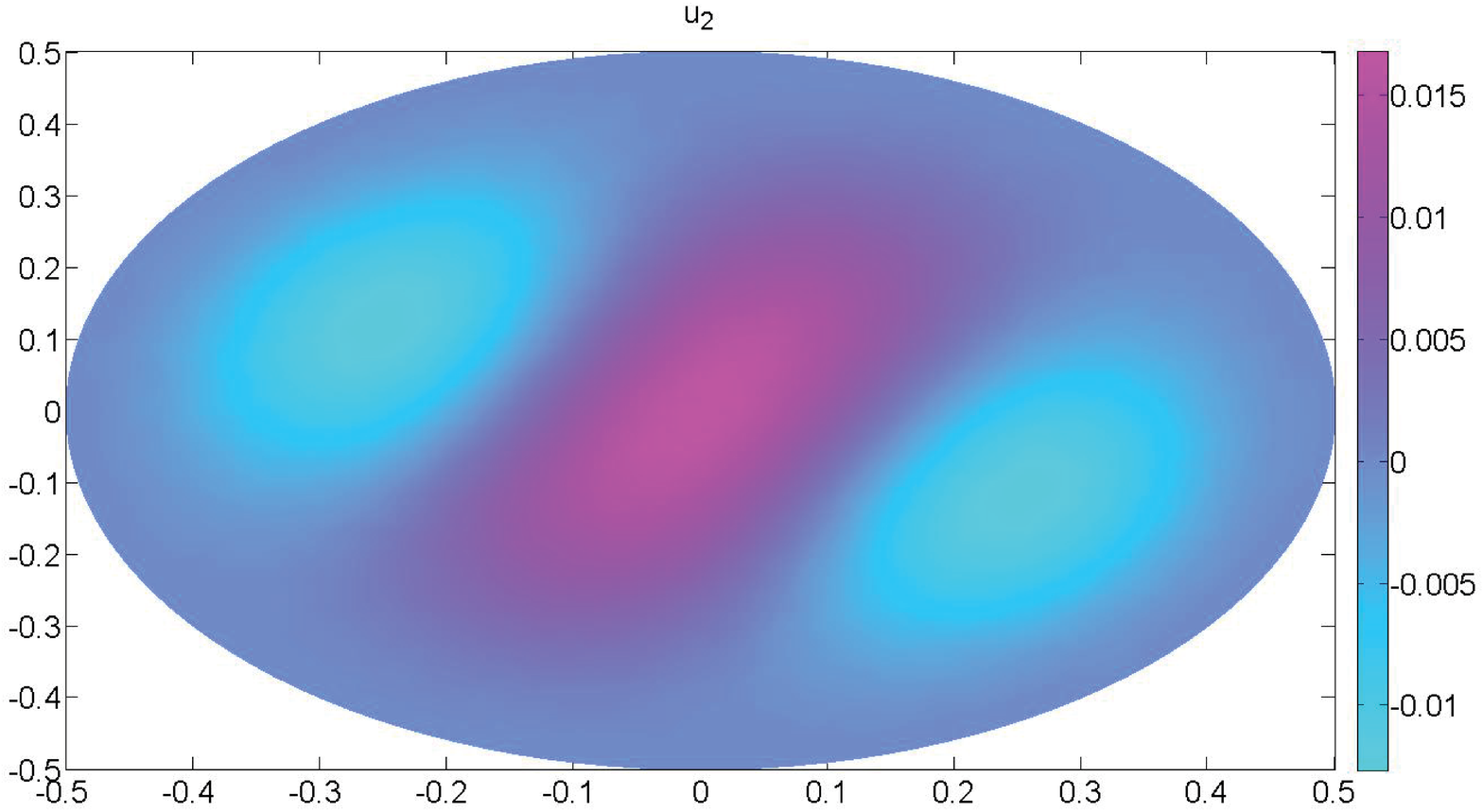}}
\subfigure{\includegraphics[width=0.3\textwidth,height=0.25\textwidth]{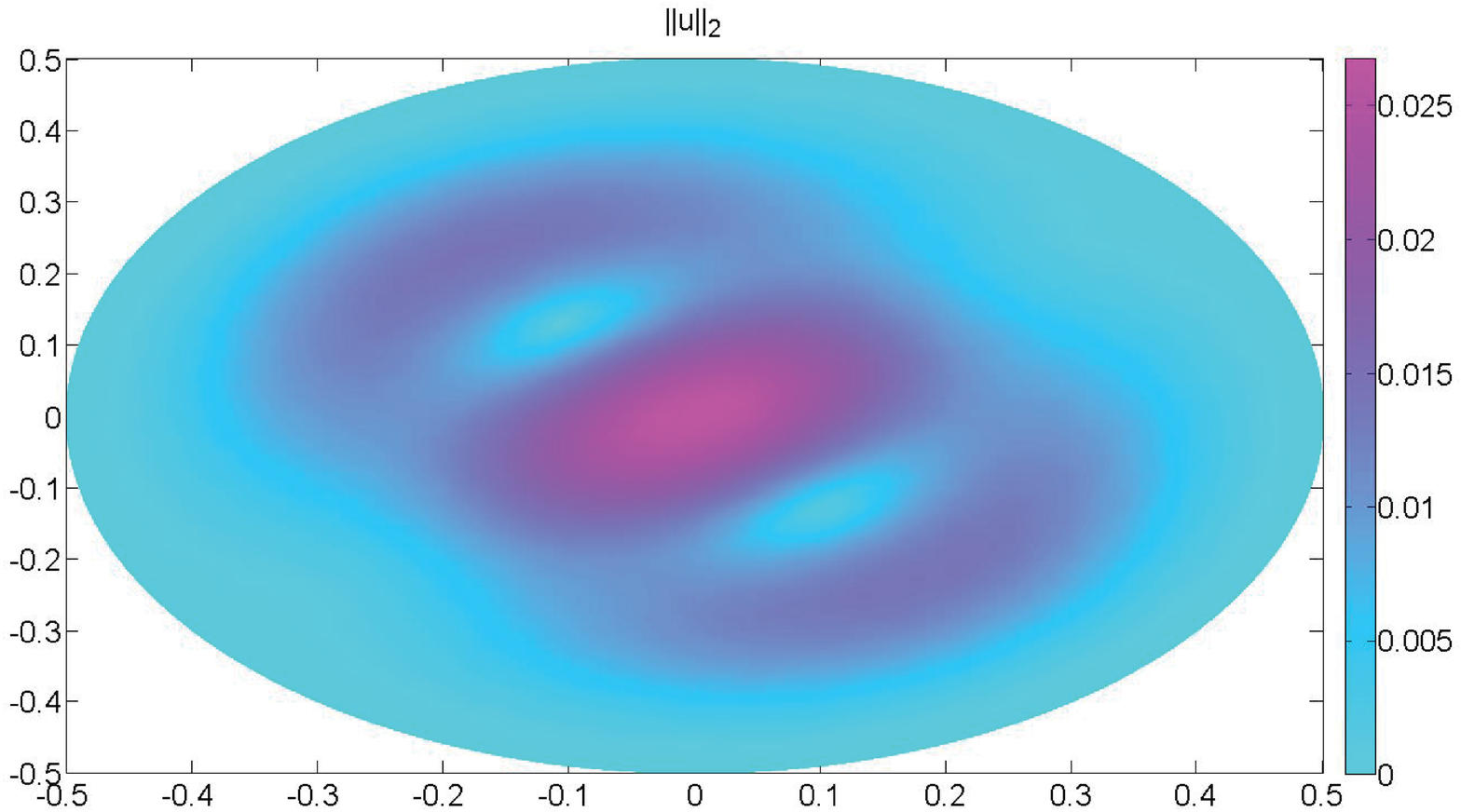}}
\caption{Second eigenfunction. Left: $u_1$. Middle: $u_2$. Right: $|{\boldsymbol u}=(u_1,u_2)|$.}
\label{secondeig}
\end{figure}

\subsection{Results by the mixed method}

For comparison and verification, we also computes the transmission eigenvalues using the mixed method. 
Table~\ref{tablemix} shows the first transmission eigenvalue and convergence rates.
\begin{table}
\begin{center}
\begin{tabular}{lllllllllll}
\hline
$h$&Unit square &order & L-shaped &order& Circle &order \\
\hline
0.1&2.393618 & & 7.117873 & &2.734748 & \\
0.05&2.040967& & 5.466449& &2.221626 & \\
0.025&1.967283& 2.273017 &  5.028595&1.983289 &2.136957& 2.527205\\
0.0125&1.948971&2.328781 &  4.907390& 2.205551&  2.113930 & 2.225566\\
\hline
\end{tabular}
\caption{The first real transmission eigenvalue of the mixed method.}
\label{tablemix}
\end{center}
\end{table}
Note that ETE is non-self-adjoint and the secant method only computes real
transmission eigenvalues. The mixed method can compute complex eigenvalues.
Table~\ref{tablemixc} gives the convergence orders of the first complex
transmission eigenvalue.
\begin{table}
\begin{center}
\begin{tabular}{lllllllllll}
\hline
$h$&Unit square &order & L-shaped &order& Circle &order \\
\hline
0.1&3.335717 - 3.171243i & & 3.631454 - 3.333002i  & &4.153116 - 0.863572i  & \\
0.05&3.494072 - 1.117876i& & 3.616318 - 3.116377i& &3.963564 - 1.126331i  & \\
0.025&3.422905 - 1.097453i & 4.450560 &  3.613011 - 3.061526i &2.049651 &3.897421 - 1.131189i&  2.137062\\
0.0125&3.402856 - 1.090597i&2.167773  &  3.611227 - 3.047409i &  2.280984&  3.876819 - 1.126730i
& 2.040988\\
\hline
\end{tabular}
\caption{The first complex transmission eigenvalue of the mixed method.}
\label{tablemixc}
\end{center}
\end{table}

\section{Conclusions and Future Work}

In this paper, we propose an iterative method to compute a few smallest
transmission eigenvalues for elastic waves. The major advantage of
this method is the accuracy and effectiveness since we only need to compute a few
eigenvalues of Hermitian eigenvalue problems instead of computing the full eigensystem
of a non-Hermitian eigenvalue problem. This fits the practical need in the sense
that in general only the smallest transmission eigenvalues are needed for
the estimation of the elastic properties of the material. We prove the
convergence of the proposed method. The effectiveness of the method is
verified by some numerical examples. We have also given a mixed method without
proof. In future, we will consider the complex eigenvalues of the elastic
waves. The analytic property of the function $f(\tau) = \gamma(\tau)-\tau$
is also an interesting topic.

\section*{Appendix A: Radially Symmetric Case on Disks}

We derive the equation satisfied by a transmission eigenvalue whose associated eigenfunction is radially symmetric on a disk.
Let $D \subset \mathbb R^2$ be a disk with radius $R$.
Let ${\boldsymbol u}=(w, v)^\top$. Writing the elasticity wave equation
\eqref{ElasticityLHS1}
component wise, we have that
\begin{eqnarray} \label{2mulambda}
(2\mu + \lambda) \frac{\partial^2 w}{\partial x_1^2}+(\lambda + \mu)\frac{\partial^2 v}{\partial x_2 \partial x_1}
				+ \mu \frac{\partial^2 w}{\partial^2 x_2}+\omega^2 \rho w = 0, \\
\label{mulambda}	   (\mu+\lambda)\frac{\partial^2 w}{\partial x_2 \partial x_1}+\mu \frac{\partial^2 v}{\partial x_1^2} +
	   (2\mu+\lambda) \frac{\partial^2 v}{\partial x_2^2}+\omega^2 \rho v = 0.
\end{eqnarray}
If we consider the solution in the form of radially-symmetric vector field
${\boldsymbol u}({\boldsymbol x}) = u(r){\boldsymbol e}_r$, where
$r=|{\boldsymbol x}|$ and ${\boldsymbol e}_r = {\boldsymbol x}/{r}$, $w = u(r)\cos \theta, v = u(r)\sin \theta$,
\eqref{2mulambda} can be written as
\[
(\mu + \lambda) \left( \frac{\partial^2 w}{\partial x_1^2}+\frac{\partial^2 v}{\partial x_2 \partial x_1}\right)
				+ \mu \left(\frac{\partial^2 w}{\partial^2 x_2}+ \frac{\partial^2 w}{\partial x_1^2}\right) +\omega^2 \rho w =0.
\]
Using polar coordinate, \eqref{2mulambda} becomes
\[
(\lambda+\mu) \left( u_{rr}  +\frac{1}{r} u_r  -\frac{1}{r^2} u   \right)
+\mu\left(u_{rr} + \frac{1}{r} u_r - \frac{1}{r^2}u\right) +\omega^2 \rho u = 0,
\]
i.e.,
\[
(\lambda+2\mu) \left( u_{rr}  +\frac{1}{r} u_r  -\frac{1}{r^2} u  
\right)+\omega^2 \rho u = 0.
\]
Similarly, \eqref{mulambda} is simply
\[
(\mu+\lambda)\left(u_{rr} + \frac{1}{r}u_r -\frac{1}{r^2} u\right)+
			\mu \left(u_{rr} + \frac{1}{r}u_r - \frac{1}{r^2}u\right)
	  +\omega^2 \rho u = 0,
\]
i.e.,
\[
(2\mu+\lambda)\left(u_{rr} + \frac{1}{r}u_r -\frac{1}{r^2} u\right) +\omega^2 \rho u = 0.
\]
The above equation can be written as
\[
r^2 \frac{{\rm d}^2 u}{{\rm d}r^2}+r \frac{{\rm d}u}{{\rm d}r} + \left( r^2
\frac{\omega^2 \rho}{2\mu+\lambda}-1 \right) u = 0.
\]
The solution of the above equation is given by the Bessel function of order one
$J_1(ar)$, where
\[
a = \omega \sqrt{\frac{\rho}{2\mu + \lambda}}.
\]
Then we obtain that ${\boldsymbol u}=(w, v)^\top:=(J_1(ar)\cos \theta,
J_1(ar) \sin \theta)^\top$.

Next we look at the boundary condition involving $\sigma({\boldsymbol u}) {\boldsymbol \nu}$. For the
transmission eigenvalue problem, we assume that
\[
{\boldsymbol u} = \begin{pmatrix} J_1(a_1 r) \cos \theta \\ J_1(a_1 r) \sin \theta \end{pmatrix}, \quad
{\boldsymbol v} = C\begin{pmatrix} J_1(a_2 r) \cos \theta \\ J_1(a_2 r) \sin \theta \end{pmatrix},
\]
where $C$ is a constant to be determined from \eqref{tep} and
\[
a_1 = \omega \sqrt{\frac{\rho_1}{2\mu + \lambda}}, \quad a_2 = \omega \sqrt{\frac{\rho_2}{2\mu + \lambda}}.
\]

Note that ${\boldsymbol \nu} = (\nu_1, \nu_2)^\top = (\cos \theta, \sin
\theta)^T$. It follows from \eqref{DefSigma} that the first component of
$\sigma({\boldsymbol u}){\boldsymbol \nu}$ is
\begin{eqnarray*}
\lambda \left[ \frac{\partial }{\partial r}J(ar) + \frac{1}{r} J(ar) \right]
\cos \theta  + 2\mu \frac{\partial }{\partial r}J(ar) \cos \theta.
\end{eqnarray*}
The second component of $\sigma({\boldsymbol u}){\boldsymbol \nu}$ is
\begin{eqnarray*}
 2\mu \frac{\partial }{\partial r}J(ar)  \sin \theta + \lambda \left[ 
\frac{\partial }{\partial r}J(ar) + \frac{1}{r} J(ar) \right] \sin \theta.
\end{eqnarray*}

Using the boundary conditions \eqref{tep}, we obtain
\[
C=\frac{J_1(a_1 R)}{J_1(a_2 R)},
\]
and
\[
\left(2\mu \frac{\partial }{\partial r}J(a_1r)  + \lambda \left[  \frac{\partial }{\partial r}J(a_1r) + \frac{1}{r} J(a_1r) \right] \right) \Big\vert_{r=R}
= C\left(2\mu \frac{\partial }{\partial r}J(a_2r)  + \lambda \left[  \frac{\partial }{\partial r}J(a_2r) + \frac{1}{r} J(a_2r) \right] \right) \Big \vert_{r=R}.
\]
Hence $\omega$ is a transmission eigenvalue if it satisfies
\begin{equation}\label{eqZ0}
Z_0(\omega):=\left | \begin{array}{cc} J_1\left( a_1R\right) & J_1\left( a_2 R\right)  \\
a_1J_1'(a_1 R)  & a_2 J_1'(a_2 R)  \end{array}\right|=0.
\end{equation}
Figure \ref{absZ0s} is the contour plot of $|Z_0(\omega)|$ on the complex plane.

\begin{figure}
\begin{center}
{ \scalebox{0.3} {\includegraphics{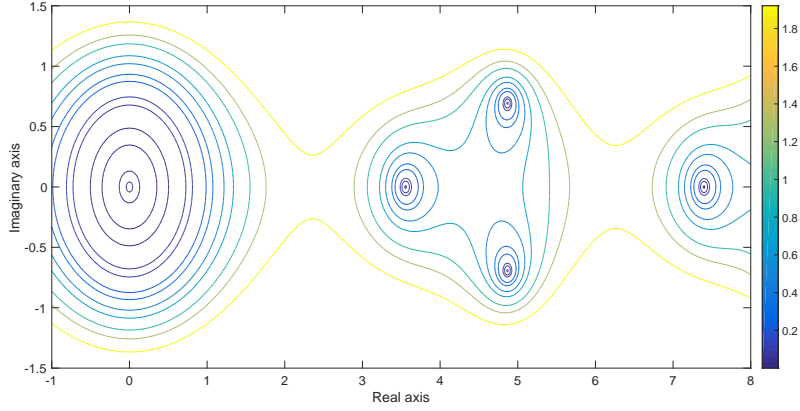}}}
\caption{The contour plot of $|Z_0(\omega)|$ with $\mu = 1/16, \lambda = 1/4,
\rho_0 = 1, \rho_1 = 4$. The centers of the circular curves indicate the
locations of transmission eigenvalues.}
 \label{absZ0s}
\end{center}
\end{figure}

\section*{Appendix B: Imposing Boundary Conditions}\label{AppendixB}

The boundary condition for $V$ needs careful treatment for the Argyris
element. On one boundary of a triangle $\gamma_e \subset \mathcal{T} $ with unit
outward normal $(n_x,n_y)$ and unit tangential vector $(\tau_x,\tau_y)$, 
we consider the case $n_x n_y\neq 0$. Otherwise it is easy to figure out. It is clear that
\begin{equation}\label{relation}
\tau_xn_x+\tau_yn_y=0,\quad \tau_x^2+\tau_y^2=1,\quad n_x^2+n_y^2=1.
\end{equation}
On the boundary $\gamma_e$, $\boldsymbol u=(u_1,u_2)^\top=\boldsymbol{0}$, so
the tangent derivatives are also $\boldsymbol{0}$, i.e.,
\[
\frac{\partial u_1}{\partial x}\tau_x+\frac{\partial u_1}{\partial y}\tau_y=0,\quad \frac{\partial u_2}{\partial x}\tau_x+\frac{\partial u_2}{\partial y}\tau_y=0.
\]
The boundary condition $\sigma(\boldsymbol{u}){\boldsymbol \nu}=\boldsymbol{0}$
means
\begin{eqnarray*}
((\lambda+2\mu)\partial_x u_1 + \lambda \partial_y u_2)n_x+ \mu(\partial_y u_1 + \partial_x u_2)n_y=0,\\
\mu (\partial_x u_2 + \partial_y u_1)n_x+(\lambda\partial_x u_1 +(\lambda+2\mu)\partial_y u_2)n_y=0.
\end{eqnarray*}
Substituting
\[
\frac{\partial u_1}{\partial y}=-\frac{\tau_x}{\tau_y}\frac{\partial
u_1}{\partial x},\quad \frac{\partial u_2}{\partial
y}=-\frac{\tau_x}{\tau_y}\frac{\partial u_2}{\partial x},
\]
into above equations, we have
\begin{eqnarray*}
(\lambda+2\mu)\partial_x u_1n_x -\frac{\tau_x}{\tau_y}\lambda \partial_x u_2 n_x -\frac{\tau_x}{\tau_y}\mu\partial_x u_1n_y + \mu\partial_x u_2n_y=0,\\
\mu\partial_x u_2 n_x-\frac{\tau_x}{\tau_y}\mu\partial_x u_1 n_x+\lambda\partial_x u_1n_y-\frac{\tau_x}{\tau_y}(\lambda+2\mu)\partial_x u_2n_y=0,
\end{eqnarray*}
i.e.,
\begin{eqnarray*}
\partial_x u_1[n_x(\lambda+2\mu)-\mu \frac{\tau_x}{\tau_y}n_y]+\partial_x u_2[\mu n_y-\frac{\tau_x}{\tau_y}\lambda n_x]=0,\\
\partial_x u_1[\lambda n_y-\frac{\tau_x}{\tau_y}\mu n_x]+\partial_x u_2[\mu
n_x-(\lambda+2\mu)n_y\frac{\tau_x}{\tau_y}]=0,
\end{eqnarray*}
which yields 
\begin{eqnarray*}
\partial_x u_1[\tau_yn_x(\lambda+2\mu)-\mu\tau_xn_y]-\partial_x u_2(\lambda+\mu)n_x\tau_x=0,\\
-\partial_x u_1(\lambda+\mu)n_x\tau_x+\partial_x u_2[\mu n_x \tau_y-(\lambda+2\mu)n_y\tau_x]=0.
\end{eqnarray*}
Here we have used the fact that $\tau_xn_x+\tau_yn_y=0$. Using \eqref{relation}, the
determination of the above equations is
\begin{eqnarray*}
&&\mu(\lambda+2\mu)n_x^2(1-\tau_x^2)+\mu(\lambda+2\mu)\tau_x^2(1-n_x^2)+(\lambda+2\mu)^2n_x^2\tau_x^2+\mu^2n_x^2\tau_x^2-(\lambda+\mu)^2n_x^2\tau_x^2 \nonumber\\
&=&\mu(\lambda+2\mu)(n_x^2+\tau_x^2)+n_x^2\tau_x^2((\lambda+2\mu)^2+\mu^2-(\lambda+\mu)^2-2\mu(\lambda+2\mu))\nonumber\\
&=&\mu(\lambda+2\mu)(n_x^2+\tau_x^2)>0.
\end{eqnarray*}
Consequently,
\[
\partial_x u_1=\partial_y u_1=\partial_x u_2=\partial_y u_2=0.
\]
The tangential derivatives of all the first-order derivatives are 0. Taking
$u_1$ for example, we have
\begin{eqnarray*}
\frac{\partial^2 u_1}{\partial x^2}\tau_x+\frac{\partial^2 u_1}{\partial x \partial y}\tau_y=0,\\
\frac{\partial^2 u_1}{\partial x \partial y}\tau_x+\frac{\partial^2 u_1}{\partial y^2}\tau_y=0,
\end{eqnarray*}
which implies
\begin{equation}\label{secondderi}
\frac{\partial^2 u_1}{\partial x^2}=-\frac{\tau^2_y}{\tau^2_x}\frac{\partial^2
u_1}{\partial y^2}, \quad \frac{\partial^2 u_1}{\partial x \partial
y}=-\frac{\tau_y}{\tau_x}\frac{\partial^2 u_1}{\partial y^2}.
\end{equation}
If a point belonging to two edges $\gamma_{e1}$ and $\gamma_{e2}$ satisfies \eqref{secondderi} with different tangential derivatives, we have
\[
\frac{\partial^2 u_1}{\partial x^2}=\frac{\partial^2 u_1}{\partial y^2}=\frac{\partial^2 u_1}{\partial x \partial y}=0.
\]


\end{document}